\newcommand{\cay}[2]{\mathrm{Cay}(#1,#2)}
\newcommand{\Z}{\mathbb{Z}}
\newcommand{\supp}[1]{\mathrm{supp}(#1)}
\newcommand{\std}{S_{\mathrm{std}}}
\newcommand{\TS}[3]{\mathrm{TS}\left(#1,#2,#3\right)}
\newcommand{\neutralize}[1]{\expandafter\let\csname c@#1\endcsname\count@}
\newcommand{\thistheoremname}{}
\newtheorem*{genericthm*}{\thistheoremname}
\newenvironment{namedthm*}[1]
{\renewcommand{\thistheoremname}{#1}%
	\begin{genericthm*}}
	{\end{genericthm*}}
\theoremstyle{plain}
\newtheorem{thm}{Theorem}[section] % reset theorem numbering for each chapter
\newtheorem{proposition}[thm]{Proposition}
\newtheorem{lemma}[thm]{Lemma}
\newtheorem{corollary}[thm]{Corollary}
\theoremstyle{definition}
\newtheorem{remark}[thm]{Remark}
\newtheorem{definition}[thm]{Definition} % definition numbers are dependent on theorem numbers
\newtheorem{example}[thm]{Example} % same for example numbers
\newtheorem{question}[thm]{Question}
\address{Département de Mathématiques et Applications, \'{E}cole Normale Sup\'{e}rieure, PSL Research University, 45 rue d'Ulm, 75005, Paris, France.}
\email{eduardo.silva@ens.fr // edosilvamuller@gmail.com}
\title{Dead ends on wreath products and lamplighter groups}
\author{Eduardo Silva}
\date{\today}
\begin{document}

\newtheorem{thmy}{Theorem}
\renewcommand{\thethmy}{\Alph{thmy}} % "letter-numbered" theorems
\newenvironment{thmx}{\stepcounter{thm}\begin{thmy}}{\end{thmy}}
%%%%
\maketitle

\begin{abstract}
	For any finite group $A$ and any finitely generated group $B$, we prove that the corresponding lamplighter group $A\wr B$ admits a standard generating set with unbounded depth, and that if $B$ is abelian then the above is true for every standard generating set. This generalizes the case where $B=\mathbb{Z}$ together with its cyclic generator \cite{ClearyTaback05}. When $B=H*K$ is the free product of two finite groups $H$ and $K$, we characterize which standard generators of the associated lamplighter group have unbounded depth in terms of a geometrical constant related to the Cayley graphs of $H$ and $K$. In particular, we find differences with the one-dimensional case: the lamplighter group over the free product of two sufficiently large finite cyclic groups has uniformly bounded depth with respect to some standard generating set.
\end{abstract}

\section{Introduction}
\label{Introduction}

Let $G$ be a finitely generated group endowed with the word length $\|\cdot\|_S$ associated to a finite symmetric generating set $S$. By definition, any element $g\in G$ is the endpoint of a geodesic path from $e_G$ to $g$, and one can ask whether such paths can be extended beyond $g$, while remaining geodesic. Any element that fails to satisfy the above is called a \textit{dead end} of $G$ with respect to $S$.

The existence of dead end elements in a group is actually not so rare. For example, $\mathbb{Z}$ with the generating set $\{\pm 2, \pm 3\}$ has $1$ and $-1$ as dead ends of word length $2$. However, these are the only ones in this example, and in fact any abelian group always has finitely many dead ends \cite{lehnert2009some,vsunic2008frobenius}. For a virtually abelian group $G$ a slightly weaker condition holds: there exists a constant $M\ge 0$, which depends on the choice of $S$, such that any element $g\in G$ is at distance at most $M$ from a geodesic path of length $\|g\|_{S}+1$ that starts at $e_G$ \cite{warshall2010deep}. In such a case we say that $(G,S)$ has \textit{uniformly bounded depth}. In addition to virtually abelian groups, this property is satisfied for any choice of $S$ by hyperbolic groups \cite{Bogopolski97} and by groups with two or more ends \cite{lehnert2009some}. Intuitively, if $(G,S)$ has uniformly bounded depth, then geodesic paths starting at $e_G$ can be connected to longer ones at the cost of backtracking a constant number of steps.

If $(G,S)$ does not have uniformly bounded depth, we say that it has \textit{unbounded depth}. This means that one can find, for an arbitrary $n\ge 1$, elements $g\in G$ whose $n$-neighborhood is contained in the ball of radius $\|g\|_S$ centered at $e_G$. The existence of infinite groups with this property is not evident, and the first example was given by Cleary and Taback \cite{ClearyTaback05}, who showed that the lamplighter group $\Z/2\Z\wr \Z=\langle a,t\mid a^2, [a,t^iat^{-i}], i\in \mathbb{Z} \rangle$ has unbounded depth with respect to the generating set $\{a,t^{\pm 1}\}$. This result is a consequence of an explicit formula for the word length of elements in $\Z/2\Z\wr \Z$, as we explain now. In general, the word length in a wreath product $A\wr B$ with respect to standard generators can be expressed in terms of the word length in $A$ and the length of solutions to the Traveling Salesperson Problem (TSP) on the corresponding Cayley graph of $B$ (as shown by Parry in \cite{Parry1992}, and explained in Subsection \ref{subsection: wreath prods}). In the case of Cleary and Taback one has $B=\Z$ with generating set $\{t^{\pm 1}\}$, and hence the exceptionally simple solutions for the TSP on a line provide an explicit formula for the word length in $\Z/2\Z\wr \Z$. Similar arguments hold when replacing $\Z$ with a free group of finite rank together with a free generating set (see Section \ref{section: dead ends on finite and free groups}), but for other groups (or even free groups with other generating sets) this problem is in general computationally harder and one cannot hope for explicit solutions. Notably, the problem of finding the word length of a given element in $A\wr B$ is known to be NP-hard whenever $B$ is a finitely generated abelian group which contains $\Z^2$ \cite{KharlampovichMoghaddam2012}.
\subsection{Main results}

In this article, we study the depth properties of more general lamplighter groups $A\wr B$ with respect to standard generating sets, where $A$ is fixed to be a non-trivial group with unbounded depth for some generating set $S_A$ (in particular, $A$ can be any non-trivial finite group). Our first result states that standard generators with unbounded depth always exist.
\begin{thmx}[\;=\;Theorem \ref{thm: existence of genset with deep dead ends in any lamplighter}] \label{thm: Theorem A}
	Let $(A,S_A)$ have unbounded depth and $B$ be any finitely generated group. Then there exists a finite generating set $S_B$ of $B$ for which $(A\wr B,S_A\cup S_B)$ has unbounded depth.
\end{thmx}

Here it is essential that we consider \textit{standard} generating sets for the wreath product $A\wr B$. Indeed, Warshall has proved that the group $\Z/2\Z \wr \Z$ (and many other solvable groups) admit non-standard generating sets with uniformly bounded depth \cite{warshall2008strongly}.

We prove Theorem A in Section \ref{section:quasi-hamiltonian property} via the study of spanning paths of minimal length of finite subsets of the Cayley graph of $B$ with respect to $S_B$. We are interested in how this length varies when modifying the endpoint of said paths. Because of this, an important case for us is where balls of $B$ centered at the identity are close to being Hamiltonian-connected (i.e. there is a Hamiltonian path connecting any two vertices), up to repeating a constant number of elements (Definition \ref{def: qH property}). The existence of such a Cayley graph for an arbitrary group $B$ is proven in Lemma \ref{lem: existence of genset with quasiHamiltonian sequence}, and it is a consequence of the fact that the cube of any finite connected graph is Hamiltonian-connected (Lemma \ref{lem: the cube of a connected graph is Hamiltonian connected}).

Another family of graphs that is close to being Hamiltonian-connected are ``rectangular grids'' in $\Z^2$ (see Subsection \ref{subsection: abelian groups} and Lemma \ref{lem: Itai grid graphs} for precise definitions). By showing that there are bijective $1$-Lipschitz embeddings of these graphs onto the Cayley graph of any (infinite) abelian group, with the exception of $\Z$ with its cyclic generator, we obtain the following result: whenever $B$ is abelian, \textit{every} choice of $S_B$ gives rise to a standard generating set of $A\wr B$ with unbounded depth (Proposition \ref{prop: any lamplighter over an abelian group has unbounded depth wrt standard gensets}).

Next, we prove that the claim of Theorem A cannot in general hold for every standard generating set. That is, we find a finitely generated group $B$ together with a finite generating set $S_B$ for which the associated lamplighter group has uniformly bounded depth (Corollary \ref{cor: there exist a lamplighter group with uniformly bounded depth with respect to std gensets}). This example is explained in Subsection \ref{subsection: lamplighter with uniformly bounded depth}, and is obtained through the study of lamplighters over free products of finite groups. In order to formulate our results, we define for a finite group $G$ with a generating set $S_G$ the \textit{Hamiltonian difference} $\mathscr{H}(G,S_G)$, which measures how much shorter a minimal spanning cycle of $\cay{G}{S_G}$ is in comparison with minimal spanning paths from $e_G$ to a non-identity element (Definition \ref{def: new constant hamiltonicity finite groups}). The main result of Section $\ref{section: lamplighters over free products}$ is the following.

\begin{thmx}[\;=\;Theorem \ref{thm: general dead ends on free products}]\label{thm: Theorem B}
	Let $(A,S_A)$ have unbounded depth, and consider two finite groups $H$ and $K$ with generating sets $S_H$ and $S_K$, respectively. Then $(A\wr (H*K),S_A\cup S_H\cup S_K)$ has uniformly bounded depth if and only if $\mathscr{H}(H,S_H)+\mathscr{H}(K,S_K)\ge 1.$
\end{thmx}
In particular, whenever $\cay{H}{S_H}$ and $\cay{K}{S_K}$ are sufficiently long cycles, the lamplighter group $A\wr (H*K)$ has uniformly bounded depth (Corollary \ref{cor: free products of finite abelian groups characterization of lamplighter depth}). This seems to be the first examples of lamplighter groups with uniformly bounded depth with respect to standard generators, in contrast with the already mentioned example by Warshall of a non-standard generating set of $\Z/2\Z\wr\Z$ with uniformly bounded depth \cite{warshall2008strongly}.

\subsection{Background and ideas of the proofs}
In the remainder of the introduction we give more background about the study of dead ends in finitely generated groups, and explain the main ideas of the proofs.

The first definition of dead ends in the literature is commonly attributed to Bogopolski \cite{Bogopolski97}, and appears in his proof of the fact that two commensurable hyperbolic groups must be bi-Lipschitz equivalent (soon after, this result was proven to hold for non-amenable groups by Whyte \cite{Whyte99} and Nekrashevych \cite{Nekrashevych98}, without referencing dead ends in their proofs). Some other contexts where dead ends occur in the study of the geometry of Cayley graphs are mentioned in the following non-exhaustive list.
\begin{enumerate}
	\item If $(G,S)$ has unbounded depth, then the language of geodesic words with respect to $S$ cannot be regular \cite{warshall2010deep}.
	
	\item Dead ends appear as points of non-negative conjugation curvature, a notion of ``medium scale'' Ricci curvature for Cayley graphs introduced by Bar-Natan, Duchin and Kropholler \cite{BarNatanDuchinKro2020}, and often lead to finding elements of strictly positive conjugation curvature \cite{kropholler2020mediumscale}.
	
	\item Dead ends of arbitrarily large retreat depth are an obstruction to the connectedness of thickened spheres of Cayley graphs, as studied by Brieussel and Gournay \cite{BG18}.
	
	\item A zero asymptotic density of dead ends in the balls of the group is used as an assumption by Saito in \cite[Section 11.2, Assumption 2. \textbf{S}]{Saito10}. It was later remarked by Calegari and Fujiwara that this is quite restrictive, since there are hyperbolic groups with standard generating sets that have a positive density of dead ends (of uniformly bounded depth) \cite{CF15}.
\end{enumerate}

As we remarked above, the first known examples of groups with unbounded depth were provided by Cleary and Taback \cite{ClearyTaback05} using \textit{wreath products}, which we explain in more detail now. Given two groups $A$ and $B$, we define their \textit{wreath product} $A\wr B$ as the semidirect product $\bigoplus_{B} A\rtimes B$, where $B$ acts by translations on the group $\bigoplus_{B} A$ of finitely supported functions $f:B\to A$. We say that a generating set of $A\wr B$ is \textit{standard} if it is of the form $\std=S_A\cup S_B$, where $S_A$ and $S_B$ are generating sets of $A$ and $B$, respectively. 

Even though the Cayley graph of a wreath product $A\wr B$ with respect to a standard generating set is not completely understood (descriptions of Cayley graphs of lamplighter groups $\Z/n\Z \wr \Z$ as Diestel-Leader graphs are known for non-standard generators \cite{Woess2005}), the word metric can be described in terms of the ones of $A$ and $B$.  As we explain in Subsection \ref{subsection: wreath prods}, the word length of an element in $(A\wr B,\std)$ can be expressed in terms of the minimal length of paths in $\cay{B}{S_B}$ which start at $e_B$, visit some finite subset of elements $F\subseteq B$, and finish in some other one $x\in B$. These paths are solutions to the Traveling Salesperson Problem (TSP) in $\cay{B}{S_B}$, and we denote the length of a minimal such path by $\TS{e_B}{x}{F}$. More precisely, Equation \eqref{eq: express word length of wreath product} says that for $g=(f,x)\in A\wr B$,
$$
\|g\|_{\std}=\sum_{v\in \supp{f}}\|f(v)\|_{A} + \TS{e_B}{x}{\supp{f}}.
$$

A common way to interpret this formula is to think of $\cay{B}{S_B}$ as a street that has lamps at every vertex, where each lamp can be at a different state for each element of $A$, so that a group element $(f,b)\in A\wr B$ is given by a lamps configuration $f$ and a position $b\in B$. Then the generators of $S_B$ account for moving through the street, while the generators of $S_A$ change the state of the lamp at the current position. This is the origin of the name ``lamplighter group'', and we call $A$ the \textit{lamps group} and $B$ the \textit{base group}. When $\cay{B}{S_B}$ is a tree, it is possible to give a simple description of the solutions to the TSP inside the graph, and hence obtain an explicit formula for the word length in $A\wr B$  (Lemma \ref{lem: word length lamplighter over tree}). This is in particular used by Cleary and Taback, who studied lamplighter groups of the form $A\wr \Z$, where $A$ has unbounded depth with respect to some generating set $S_A$, and $S_{\Z}$ is the cyclic generating set of $\Z$. They proved that the standard generating set $\std=S_A\cup S_{\Z}$ has unbounded depth, and thus provided the first examples of groups with this property \cite{ClearyTaback05}. Their arguments strongly rely on the fact that the Cayley graph of the base group is a line, and generalizes to finitely generated free groups (Proposition \ref{prop: characterization of dead ends on lamplighters over free groups}). On the other hand, for other base groups, or even other generators of $\Z$, the TSP is known to be computationally hard and hence it is not possible to hope for an explicit description of the word length of a wreath product. 

Our results concern the existence of dead ends of arbitrarily large depth in wreath products $A\wr B$ over more general base groups $B$. Note that if the group $A$ has uniformly bounded depth with respect to $S_A$, then it is straightforward to see that $A\wr B$ also does with respect to $\std$. Because of this, we concentrate on the case where $(A,S_A)$ has unbounded depth and $B$ is any group with a finite generating set $S_B$. We use the name \textit{lamplighter group} to refer to any such wreath product. 

Our focus on dead ends leads us to study the value of $\TS{e_B}{v}{F}$ in finite connected subgraphs $F$ of $\cay{B}{S_B}$ that contain large balls centered at the identity $e_B$. In many cases, we observe that these solutions are actually Hamiltonian paths: they visit each vertex of $F$ exactly once, except possibly for one element when the path is a cycle (see Example \ref{example: qH Z^2}). Thanks to the Fuzz Lemma \ref{lem: Fuzz Lemma}, frequently used by Warshall for studying depth properties of groups \cite{warshall2008strongly,warshall2010deep,warshall2011group}, it is enough to estimate the word length up to an additive constant. This together with the above remark on Hamiltonian paths motivates the following definition.  A Cayley graph $\cay{B}{S_B}$, where $B$ is a group with a finite generating set $S_B$, is said to be \textit{quasi-Hamiltonian} if there exists a constant $M\ge 0$ such that for any $n\in \mathbb{N}$, there exists a finite subset $F\subseteq B$ which contains the ball $B_{S_B}(e_B,n)$ and for which $|\TS{e_B}{v}{F} - |F||\le M$, for any $v\in F$. This is, a path of minimal length which starts at $e_B$, visits all elements of $F$ and finishes at any $v\in F$, while visiting each element of $F$ at most once except for a bounded number of instances.

In Lemma \ref{lem: Hamiltonian sequence implies unbounded depth} we prove that if $\cay{B}{S_B}$ is a quasi-Hamiltonian presentation, then the corresponding lamplighter group $(A\wr B,\std)$ has unbounded depth. Then, using results about Hamiltonian-connectedness of finite graphs, we prove that any infinite group admits a quasi-Hamiltonian Cayley graph (Lemma \ref{lem: existence of genset with quasiHamiltonian sequence}), and hence that any lamplighter group $A\wr B$ admits a standard generating set with unbounded depth (Theorem \ref{thm: existence of genset with deep dead ends in any lamplighter}).

Having proved the existence of at least one quasi-Hamiltonian Cayley graph for any group, one may wonder if in general an arbitrary generating set will have this property. A clear restriction is that if $\cay{B}{S_B}$ is a tree then it cannot be quasi-Hamiltonian, since any path visiting all vertices in a ball is forced to repeat an unbounded amount of them (Example \ref{example: qH free group}). It turns out that this is the only constraint in the family of abelian groups: we prove that any Cayley graph of a finitely generated abelian group is quasi-Hamiltonian, with the exception of $\cay{\Z}{\{\pm 1\}}$ (Proposition \ref{prop: any cayley graph of a fg abelian group has a quasi Hamiltonian sequence}).  In order to show this result, we use the fact that grid graphs (finite induced subgraphs of $\Z^2$ with canonical generators, with vertex set $[0,n]\times[0,m]$) have spanning paths between any pair of vertices that repeat at most $2$ elements. This follows from a characterization of the existence of Hamiltonian paths in grid graphs due to Itai, Papadimitriou and Szwarcfiter \cite{ItaiPapadimitriouSzwarcfiter1982}. Then, an inductive argument shows that any Cayley graph of an abelian group, except for $\cay{\Z}{\{\pm 1\}}$, contains grid graphs as spanning subgraphs of sets containing arbitrarily large finite balls. By combining the above together with the original result of Cleary and Taback, we obtain as a corollary that any lamplighter group $A\wr B$ over an abelian group $B$ has unbounded depth, with respect to every standard generating set (Proposition \ref{prop: any lamplighter over an abelian group has unbounded depth wrt standard gensets}).

The study of Hamiltonian paths in Cayley graphs has a long history. Lovász Conjecture (for Cayley graphs) asks if any Cayley graph of a finite group has a Hamiltonian cycle and, although it has been verified for various families of groups, it remains far from being solved. We refer to Subsection \ref{subsection: Hamiltonian paths on finite Cayley graphs} for more details and to \cite{LanelPallageRatnayakeThevashaWelihinda2019} for a recent survey on the topic. On the other hand, for infinite groups there has been progress in the question of finding Hamiltonian paths or Hamiltonian circles in their Cayley graphs, that is, homeomorphic copies of the interval $[0,1]$ or the circle $S^1$, respectively, in the Freudenthal compactification of the graph \cite{MiraftabRuhmann2018}. We emphasize that our definition of quasi-Hamiltonian presentations concerns a slightly different question to the ones above since, despite our interest in infinite groups, we concentrate on paths covering finite (arbitrarily large) subgraphs of an infinite Cayley graph.

Even though Cayley graphs that are trees are not quasi-Hamiltonian, lamplighter groups over them still have unbounded depth with respect to standard generators (Proposition \ref{prop: characterization of dead ends on lamplighters over free groups}).  Hence it is natural to ask whether standard generating sets in lamplighter groups always have unbounded depth. We show that this is not the case, by constructing lamplighter groups which have uniformly bounded depth with respect to standard generators (Corollary \ref{cor: there exist a lamplighter group with uniformly bounded depth with respect to std gensets}). The Cayley graphs of the base groups of the above examples have cut vertices, that prevent them from being quasi-Hamiltonian, but at the same time contain sufficiently long cycles that allow an element to increase its word length by moving the position of the lamplighter. This construction seems to provide the first examples of lamplighter groups with uniformly bounded depth with respect to some standard generating set, in contrast with Warshall's results about non-standard generators with the same property \cite{warshall2008strongly}.

The above result is a particular case of our study of lamplighters over a free product of two finite groups $(H,S_H)$ and $(K,S_K)$. In this case, the depth properties of $A\wr (H*K)$ with respect to $\std=S_A\cup S_H\cup S_K$ are closely related to the solutions of the TSP in the finite graphs $\cay{H}{S_H}$ and $\cay{K}{S_K}$. More precisely, for a group $G$ with a generating set $S_G$ we define and study the \textit{Hamiltonian difference}
$$
\mathscr{H}(G,S_G)\coloneqq \max_{\substack{g\in G\backslash\{e_G\}}}\Big\{ \TS{e_G}{g}{G}\Big\}- \TS{e_G}{e_G}{G},
$$
where we recall that for any $g\in G$, $\TS{e_G}{g}{G}$ denotes the length of a path of minimal length in $\cay{G}{S_G}$ which starts at $e_G$, finishes at $g$, and visits all elements of $G$. When $\cay{G}{S_G}$ is Hamiltonian-connected we have $\mathscr{H}(G,S_G)=-1$, while on the other hand, $\mathscr{H}(G,S_G)$ can take any positive value if $\cay{G}{S_G}$ is chosen to be a sufficiently long cycle. The Hamiltonian difference measures how much shorter minimal spanning cycles are than minimal spanning paths from $e_G$ to a non-identity element inside $\cay{G}{S_G}$.

We prove that the value of $\mathscr{H}(H,S_H)+\mathscr{H}(K,S_K)$ completely characterizes the existence of dead ends of arbitrarily large depth in $(A\wr (H*K),\std)$ (Theorem \ref{thm: general dead ends on free products}), and we further detail the case of free products of finite abelian groups (Corollary \ref{cor: free products of finite abelian groups characterization of lamplighter depth}). Notably, a lamplighter over the free product of two sufficiently long cycles has uniformly bounded depth. Lamplighters over free products of finite groups also provide an interesting contrast to the strict structure of dead end elements of lamplighters over trees (Example \ref{example: free product unbounded depth arbitrarily far away position}).

The organization of the article is as follows. In Section \ref{section: preliminaries} we introduce the notation and basic tools we use. We also define wreath products and interpret the word length of an element in standard generating sets through solutions to the TSP in $\cay{B}{S_B}$ (Equation \ref{eq: express word length of wreath product}). Then, in Section \ref{section: dead ends on finite and free groups} we discuss lamplighter groups with $B$ finite and with $B$ a free group, which correspond to cases where the solutions to the TSP have a simple structure (with respect to our objectives of studying depth properties). We introduce the quasi-Hamiltonian property in Section \ref{section:quasi-hamiltonian property} and study its consequences on the depth properties of lamplighters. Notably, we prove Lemma \ref{lem: existence of genset with quasiHamiltonian sequence} about the existence of quasi-Hamiltonian Cayley graphs, and use it to prove Theorem \ref{thm: existence of genset with deep dead ends in any lamplighter}. The section finishes with Propositions \ref{prop: any cayley graph of a fg abelian group has a quasi Hamiltonian sequence} and \ref{prop: any lamplighter over an abelian group has unbounded depth wrt standard gensets}, regarding lamplighters over abelian groups. Finally, in Section \ref{section: lamplighters over free products} we study lamplighters over free products of finite groups, describe explicitly a lamplighter group with uniformly bounded depth for a standard generating set in Subsection \ref{subsection: lamplighter with uniformly bounded depth}, and then prove Theorem \ref{thm: general dead ends on free products}.

\section{Preliminaries}\label{section: preliminaries}
\subsection{Graphs}\label{subsection:graphs}

We start by recalling essential concepts of graph theory and by fixing our notation.

A graph $\Gamma$ is a pair $(V,E)$,
where $V=V(\Gamma)$ and $E=E(\Gamma)$ are the sets of vertices and edges of $\Gamma$, that is, $E$ consists of unordered pairs of vertices. We will work with graphs with finite as well as infinite sets of vertices and edges. For the purposes of this paper, these sets will always be countable and graphs will be locally finite, meaning that each vertex forms part of finitely many edges.

A path $P$ in $\Gamma$ is a sequence of (not necessarily distinct) vertices $P=v_1,v_2,\ldots, v_n\in V$ such that for all $1\le i <n$, there is an edge connecting $v_{i}$ to $v_{i+1}$, and we say that the length of $P$ is $n$. If moreover $v_1=v_{n}$, we say that $P$ is a cycle.

If $\Gamma$ is a finite graph, a \textit{spanning path} (resp. \textit{spanning cycle}) $P$ is one that visits each vertex of the $\Gamma$. If every vertex is visited a unique time (except for the final one in the case of a cycle), we call $P$ a \textit{Hamiltonian path} (resp.  \textit{Hamiltonian cycle}).

\begin{definition}\label{defn: Hamiltonian connected}
	A finite graph is said to be \textit{Hamiltonian} if it possesses a Hamiltonian cycle, and \textit{Hamiltonian-connected} if for any pair of distinct vertices, there is a Hamiltonian path connecting them.
\end{definition}

Any Hamiltonian-connected graph is of course Hamiltonian, but the opposite is not true: cycles of length $n\ge 4$ are Hamiltonian but no Hamiltonian-connected. This will be relevant for us in Section \ref{section: lamplighters over free products} when studying lamplighter groups over free products of cyclic groups.  

A general obstruction to Hamiltonian-connectedness is being bipartite, meaning that the vertex set $V(\Gamma)$ can be decomposed into two disjoint subsets $A$ and $B$, such that every edge in $E(\Gamma)$ connects a vertex in $A$ to a vertex in $B$. In that case a parity argument shows that, if the graph has at least $3$ vertices, there cannot be Hamiltonian paths between any two vertices. A bipartite graph with a partition $V(\Gamma)=A\cup B$ is called \textit{Hamiltonian-laceable} if there is a Hamiltonian path between any two vertices $u\in A$ and $v\in B$. This is our way of saying that $\Gamma$ has as many Hamiltonian paths as possible, given that it is bipartite.

When proving that a graph is not Hamiltonian-connected, common techniques usually rely on showing that a path visiting all vertices gets trapped at some vertex and must be forced to repeat other ones in order to finish where it is supposed to. This suggests that, if we allow the path to jump a finite bounded distance instead of only moving through adjacent vertices, we may be able to find paths which visit all vertices exactly once, with any starting and finishing point. In order to formalize that intuition, we make the following definition.

\begin{definition}\label{def: kth power of a graph} Given a finite connected graph $\Gamma$ and a positive integer $k\ge 1$, we define the $k$-th power graph $\Gamma^k$ of $\Gamma$ as the graph with $V(\Gamma^k)=V(\Gamma)$ and $$E(\Gamma^k)\coloneqq \{uv\mid u,v\in V(\Gamma)\text{ such that }d(u,v)\le k  \}.$$
	Commonly, $\Gamma^2$ is called the \textit{square} of $\Gamma$ and $\Gamma^3$ the \textit{cube} of $\Gamma$.
\end{definition}

The following result states that the cube of a finite connected graph is always Hamiltonian-connected. It was proved independently by Sekanina \cite{Sekanina1960} and Karaganis \cite{KaraganisCubeGraph1968}. This result can be proved by noting that it suffices to show it for a spanning tree of $\Gamma$, where an inductive argument can be applied.
\begin{lemma} \label{lem: the cube of a connected graph is Hamiltonian connected} Let $\Gamma$ be any finite connected graph. Then $\Gamma^3$ is Hamiltonian-connected.
\end{lemma}

Lemma \ref{lem: the cube of a connected graph is Hamiltonian connected} has been generalized to infinite graphs by Sekanina \cite{Sekanina1960}, who proved that the third power of any locally finite, $1$-ended graph has a spanning ray, and by Heinrich \cite{Heinrich1978}, who extended this fact to a class of non-locally finite graphs. With respect to Cayley graphs, Georgakopoulos used this result in \cite{GeorgakopoulosHamiltonian2009} to prove that any finitely generated group $G$ admits a finite generating set $S$ for which $\cay{G}{S}$ has a Hamiltonian circle.  We will use similar ideas in order to prove Theorem \ref{thm: existence of genset with deep dead ends in any lamplighter}.

The conclusion of Lemma \ref{lem: the cube of a connected graph is Hamiltonian connected} does not hold in general if we replace the cube of the graph by its square \cite[Figure 6.14]{GraphsandDigraphsbook2016}. However, Fleischner \cite{FleischnerTwoConnected1974,FleischnerSpanningSubgraphs1974} proved that if one adds the extra hypothesis that the graph is $2$-connected, then its square must have a Hamiltonian cycle. Moreover, Fleischner's result actually implies Hamiltonian-connectedness of the square of $2$-connected graphs, as proved by Chartrand, Hobbs, Jung, Kapoor and Nash-Williams \cite{ChartrandHobbsJungKapoorNashWilliams1974}.

We finish this subsection by giving the formal definition of the direct product of two graphs, which will be useful in Subsection \ref{subsection: abelian groups} when discussing Hamiltonian-connected properties of Cayley graphs of infinite abelian groups.
\begin{definition}\label{def: product graph}  Let $\Gamma_1,\Gamma_2$ be two graphs. We define their product $\Gamma=\Gamma_1\times \Gamma_2$ as the graph whose vertex set is $V(\Gamma)=V(\Gamma_1)\times V(\Gamma_2)$ and where two vertices $(u_1,u_2), (v_1,v_2)\in V(\Gamma)$ are connected by an edge if and only if $u_1=v_1$ and $u_2$ is connected by an edge in $E(\Gamma_2)$ to $v_2$, or if $u_2=v_2$ and $u_1$ is connected by an edge in $E(\Gamma_1)$ to $v_1$.
\end{definition}

\subsection{Groups}\label{subsection:groups}
Whenever we talk about groups, we assume that they are finitely generated. We denote by $(G,S)$ a group together with a finite (symmetric) generating set $S$. We use the notation $e_G$ for the identity element of the group $G$, or simply $e$ if there is no risk of confusion.

The (right, undirected, unlabeled) Cayley graph $\cay{G}{S}$ of $G$ with respect to the generating set $S$ is the graph whose vertices are the elements of $G$, and where two elements $g,g'$ are connected through an edge if and only if there exists $s\in S\cup S^{-1}$ with $g=g's$. In this context, a natural metric arises in $G$. Indeed, define for $g,h\in G$,
\begin{align*}
d_S(g,h)\coloneqq \min\left\{n\ge 0\mid g^{-1}h=s_1\cdots s_n,\text{ for some }s_1,\ldots,s_n\in S\cup S^{-1}\right\}.
\end{align*}
%That is, $d_S(g,h)$ is the length of a minimal path in $\cay{G}{S}$ connecting $g$ to $h$. 
The distance $d_S$ is called the word metric on $G$ associated to the generating set $S$ and it corresponds to the length of a minimal path in $\cay{G}{S}$ connecting $g$ to $h$. 
%It corresponds to the minimum number of generators of $S$ (and their inverses) one needs to multiply to $g$ in order to obtain the element $h$. 
Similarly, we define the word length associated to $S$ as
$$
\|g\|_{S}=d_S(e_G,g), \text{  for  }g\in G.
$$

Given $g\in G$ and $n\ge 0$, we define the ball of radius $n$ centered at $g$ as
$$
B(g,n)\coloneqq \{ h\in G\mid d_S(h,g)\le n \}.
$$ 
When there is risk of confusion, we use the notation $B_S(g,n)$ or $B_{(G,S)}(g,n)$ to emphasize the generating set or the group used to define the ball.

In Section \ref{section:quasi-hamiltonian property} we will be interested in finding, for each $n\ge 1$, spanning paths of finite subgraphs of $\cay{G}{S}$ containing $B(e_G,n)$, whose length is close to the length of a hypothetical Hamiltonian path, up to a uniform additive error. 

\subsection{Hamiltonian paths on finite Cayley graphs}\label{subsection: Hamiltonian paths on finite Cayley graphs}
The problem of finding Hamiltonian cycles on Cayley graphs of finite groups was first proposed by Elvira Rapaport Strasser \cite{Rapaport1959}, and then by Lovász in 1969 \cite[Appendix IV. Problem 20]{BondyMurty1979}. Lovász conjectured that every finite connected vertex-transitive graph has a Hamiltonian cycle, except for five known counterexamples: the complete graph on $2$ vertices, the Petersen graph, the Coxeter graph, and the graphs obtained by replacing in one of the last two graphs each vertex by a triangle. None of these counterexamples are Cayley graphs of groups, and hence the version Lovász conjecture for Cayley graphs of finite groups asks if any such graph with at least $3$ elements has a Hamiltonian cycle. Up until now this conjecture remains open, although it has been verified for various families of groups. Surveys on this topic can be found in \cite{LanelPallageRatnayakeThevashaWelihinda2019,WitteGallian1984,CurranGallian1996}. Notably, it is a well established fact that any Cayley graph of a finite abelian group with at least $3$ elements has a Hamiltonian cycle \cite{Marusic1983}.

With respect to properties such as Hamiltonian-connectedness or Hamiltonian-laceability, it is quick to find Cayley graphs which have none of these two properties. For example, any cycle of even length $\ell$, with $\ell >5$, is neither Hamiltonian-connected nor Hamiltonian-laceable, and hence such examples are found even within the family of finite cyclic groups. However, in 1981 Chen and Quimpo proved that among finite abelian groups these are the only counterexamples one can find.

\begin{proposition}[\cite{ChenQuimpo1981}] \label{prop: cayley graphs of abelian finite group are either cycles, Hamiltonian connected or Hamiltonian laceable } Let $\Gamma$ be a Cayley graph of a finite abelian group. Then if $\Gamma$ is not a cycle, either
	\begin{enumerate}
		\item $\Gamma$ is non-bipartite and Hamiltonian-connected, or
		\item $\Gamma$ is bipartite and Hamiltonian-laceable.
	\end{enumerate}
\end{proposition}

It is natural to ask if the conclusion of Proposition \ref{prop: cayley graphs of abelian finite group are either cycles, Hamiltonian connected or Hamiltonian laceable } holds in general, that is, if any Cayley graph of a finite group $G$ of degree at least $3$ is either Hamiltonian-connected, or it is bipartite and Hamiltonian-laceable \cite[Questions 4.1-4.3]{DupuisWagon2015}. As we saw above, this holds if $G$ is abelian, and it has also been proved using computational methods for groups of order $|G|<48$ \cite{WitteWilk2020}. Some other particular families of groups have been shown to satisfy this property \cite{Araki2006,Alspach2015,AlspachChenMcAvaney1996,AlspachQin2001}, but the general case is far from being solved.

\subsection{Wreath products and lamplighter groups}\label{subsection: wreath prods}\label{subsection:wreath prods}

For $A,B$ groups, we define their \textit{wreath product} $A\wr B$ as the semidirect product $\bigoplus_{B} A \rtimes B$, where $\bigoplus_{B} A $ is the group of finitely supported functions $f:B\to A$ endowed  with the operation $\oplus$ of componentwise multiplication. We denote by $\supp{f}$ the finite subset of $B$ to which $f$ assigns non-trivial values. Here, the group $B$ acts on the direct sum $\bigoplus_{B} A $ from the left by translations. That is, for $f:B\to A$, and any $b\in B$ we have
$$
(b\cdot f)(x)=f(b^{-1}x), \ x\in B.
$$

Elements of $A\wr B$ can be expressed as tuples $(f,b)$, where $f:B\to A$ is a finitely supported function and $b\in B$, and the product between two such elements elements $(f,b)$, $(f',b')\in A\wr B$ is given by
$$
(f,b)\cdot(f',b')= (f\oplus (b\cdot f'),bb').
$$
There is a natural embedding of $B$ into $A\wr B$ via the mapping
\begin{align*}
B&\to A\wr B\\
b&\mapsto (\mathds{1},b),
\end{align*}
where $\mathds{1}(x)=e_A$ for any $x\in B$. Similarly, we can embed $A$ into $A\wr B$ via the mapping
\begin{align*}
B&\to A\wr B\\
a&\mapsto (\delta^{a}_{e_B},e_B),
\end{align*}
where $\delta^{a}_{e_B}(e_B)=a$ and $\delta^{a}_{e_B}(x)=e_A$ for any $x\neq e_B$.

In particular, if we consider finite symmetric generating sets $S_A$ and $S_B$ of $A$ and $B$, respectively, their copies inside $A\wr B$ through the above embeddings generate the entire group $A\wr B$. We call $S_{\mathrm{std}}\coloneqq S_A\cup S_B$ the \textit{standard generating set} for $A\wr B$ associated to the generators $S_A$ and $S_B$. 

In order to understand how the word length of an element with respect to $\std$ looks like, consider $g=(f,x)\in A\wr B$ and write it as a product of generators of $\std$,
$$
g=a_0b_1a_1b_2a_2\cdots b_ma_m,
$$
with $m\ge 0$, $a_0,a_m\in S_A\cup\{ e_A \}$, $a_1,\ldots,a_{m-1}\in S_A$, and $b_1,\ldots,b_m\in S_B$. In particular, it holds that $$f=a_0(b_1\cdot a_1)(b_1b_2\cdot a_2)\cdots (b_1b_2\cdots b_m \cdot a_m),$$ 
so that $\supp{f}\subseteq \{e_B,b_1,b_1b_2,\ldots,b_1b_2\cdots b_m\}$, and $x=b_1b_2\cdots b_m$. This factorization of $g$ into generators of $\std$ can be interpreted as a path in the Cayley graph $\cay{B}{S_B}$ which begins at $e_B$, visits all vertices in $\supp{f}$ while generating the appropriate group element for $f$ in each one, and ends at $b_1b_2\cdots b_m$. This is the reason behind the name ``lamplighter group'': one can think of the Cayley graph $\cay{B}{S_B}$ as a street with lamps at every vertex, each of which can be in a different state given by an element of $A$. Then a word in $\std$ evaluating to an element $g=(f,x)\in A\wr B$ corresponds to a path from the origin $e_B$ to $x$, which passes through all vertices of $\supp{f}$ and at each one of them uses the generators of $S_A$ in order to reach the value of $f$ there. We refer to $x$ as the \textit{position of the lamplighter}, and to $f$ as the \textit{lamps configuration}.

The above discussion shows that the word length of an element $g=(f,x)\in A\wr B$ with respect to $\std$ can be expressed as
\begin{equation}\label{eq: express word length of wreath product}
\|g\|_{\std}=   \sum_{y\in \supp{f}}\|f(y)\|_{S_A}+\TS{e_B}{x}{f},
\end{equation}
where $\TS{b}{b'}{f}$ corresponds to the length of a path of minimal length in $\cay{B}{S_B}$ which starts at $b$, finishes at $b'$ and visits all vertices of $\supp{f}$. The notation $\mathrm{TS}$ stands for the interpretation of said walk as a solution to the Traveling Salesperson Problem (TSP). We also write $\TS{b}{b'}{F}$ to denote a walk of minimal length in $\cay{B}{S_B}$ starting at $b$, finishing at $b'$ and visiting all vertices from a finite subset $F\subseteq B$.

Equation \eqref{eq: express word length of wreath product} has been widely used to study wreath products. We already mentioned in the introduction that Cleary and Taback used it to study depth properties of lamplighter groups $A\wr \Z$ \cite{ClearyTaback05}, but it has also been used to study other metric properties of wreath products, as for example by Parry in order to study rationality and algebraicity of growth series \cite{Parry1992}, by Davis and Olshanskii to study distortion of subgroups of some wreath products \cite{DavisOlshanskii2011}, among many others.

\subsection{Dead ends on groups}\label{subsection: dead ends on groups}
Fix $(G,S)$ a group together with a finite generating set. We say that an element $g\in G$ is a \textit{dead end} if for any $s\in S\cup S^{-1}$, $\|gs\|_S\le \|g\|_S$, and we define its \textit{depth} with respect to $S$ as the maximal number $n\ge 1$ such that for any choice of generators $s_1,\ldots,s_k\in S\cup S^{-1}$, $k\le n$, we have $\|gs_1\cdots s_k\|_S\le\|g\|_S$. That is, the depth of a dead end is the maximal number $n$ such that multiplying by at most $n$ generators does not increase the word length of $g$. Another way of saying this is that $g$ maximizes the function $\|\cdot\|_S$ on its $n$-neighborhood.

Although their origin might be older, the first definition of dead ends is commonly attributed to Bogopolski in 1997 \cite{Bogopolski97}, who used it while proving that two commensurable hyperbolic groups must be bi-Lipschitz equivalent. Soon after, this property was shown to hold for any non-amenable groups by Whyte \cite{Whyte99} and Nekrashevych \cite{Nekrashevych98} without using the notion of dead ends. 

Ideas related to dead ends had already appeared before 1997 in the literature, as for example by Champetier in \cite[Lemme 4.19]{Champetier95} where it is proven that group presentations $G=\langle S\mid R\rangle$ satisfying the $C'(1/6)$ small cancellation condition (see \cite[Chapter V.2]{LyndonSchupp2001} for a definition) satisfy the following property: for any $g\in G$, the set $\{ s\in S\cup S^{-1}\mid \|gs\|_S\le \|g\|  \}$ has at most two elements. This is also discussed by de la Harpe in \cite[Chapter IV.A. 13,14]{delaHarpe2000}. In the latter, dead ends are introduced as an obstruction for the \textit{extension property for geodesic segments} of a graph. With respect to more recent literature, dead ends are discussed in the chapters of some books on Geometric Group Theory, as in \cite[Subsections 1.8.5, 2.6.4 \& 4.7.2]{sampling2018} and in \cite[Chapters 12, 15 \& 16]{officehours17}.

The depth of a dead end $g\in G$ with respect to a finite generating set $S$ can be interpreted as the distance in $\cay{G}{S}$ between $g$ and the complement of the ball $B_S(e_G,\|g\|_{S})$, minus $1$. Note that if $G$ is an infinite group, then the depth of any element $g$ has as an upper bound $2\|g\|_S$ (since any infinite finitely generated group contains an infinite geodesic ray). On the other hand, finite groups always have elements of infinite depth: those that maximize the word metric $\|\cdot \|_{S}$. However, although the depth of each element must be finite, it can be possible that $G$ contains dead ends of arbitrarily large depth.

\begin{definition}\label{def: unbounded depth} Let $G$ be a finitely generated group and $S$ a finite generating set. We say that $G$ has \textit{unbounded depth} if for any $n\in \mathbb{N}$ there exists a dead end $g\in G$ of depth at least $n$. Otherwise, we say that $G$ has \textit{uniformly bounded depth with respect to $S$.} 
\end{definition}

We again emphasize the dependence of these definitions on the choice of the generating set $S$. It may happen that a group $G$ has unbounded depth with respect to a generating set $S$, and uniformly bounded depth with respect to another generating set $S'$. In fact, \v{S}unić proved in \cite{vsunic2008frobenius} that any group admits a generating set with dead ends. In the same paper, he proved that $\Z$ always has finitely many dead ends, a fact that would later be proved to hold for any finitely generated abelian group by Lehnert \cite{lehnert2009some}. In particular, there are no generating sets with unbounded depth among such groups. Other families of groups which have uniformly bounded depth with respect to any generating set are hyperbolic groups \cite{Bogopolski97}, and more generally any group which has a regular language of geodesics for any generating set \cite{warshall2010deep}, and groups with more than one end \cite{lehnert2009some}. On the other hand, examples of group with unbounded depth are notably the lamplighter group over the line $\Z/2\Z\wr \Z$ \cite{CET2006,ClearyTaback05,ClearyTabackmetricproperties05}, and Houghton's group $H_2$ \cite{lehnert2009some}. In general, having unbounded depth is not a group invariant, since there exist groups (even finitely presented ones) which have unbounded depth for one generating set and uniformly bounded depth for another one \cite{riley2006unbounded,cleary2006finitely}. A remarkable exception is the discrete Heisenberg group, which has unbounded depth with respect to any finite generating set, as shown by Warshall \cite{warshall2010deep,warshall2011group}.

In order to prove the existence of dead ends in a Cayley graph, it is not necessary to find them explicitly: it suffices to show the existence of elements which increase their word length by a bounded amount when multiplying by a large number of generators. The following lemma formalizes this, and it has been widely used to show the existence of dead ends of arbitrary depth by Warshall \cite{warshall2008strongly,warshall2010deep,warshall2011group}.

\begin{lemma}[Fuzz Lemma]\label{lem: Fuzz Lemma}
	Let $X$ be a metric space, and $f:X\to \mathbb{Z}$ a function. Suppose there exists $M> 0$ such that for some $x\in X$ and $r\in \mathbb{N}^{+}$ we have
	\begin{equation*}\label{eq: condition for Fuzz lemma}
	f(x')\le f(x)+M, \ \text{ for all }x'\in B(x,r).
	\end{equation*}
	Then there exists some $x_0\in X$ such that $f$ attains a maximum on $B(x_0,r/M)$ at $x_0$.
\end{lemma}

To finish this section we introduce a slightly different notion of depth for dead ends, concerned with how much actual backtracking is needed in order to reach elements of bigger word length.
\begin{definition}\label{def: retreat depth}
	Given a dead end $g\in G$ with respect to a generating set $S$, we say that $g$ has \textit{retreat depth} (or \textit{strong depth}) $k$ if $k$ is the minimal number such that there exists a geodesic from $g$ to an element of $B_S(e_G,\|g\|_{S}+1)$ which does not pass through $B_S(e_G,\|g\|_{S}-k-1)$.
\end{definition} That is, the retreat depth of $g$ measures how many steps back in $\cay{G}{S}$ from $g$ one needs to take in order to eventually reach a bigger sphere. The retreat depth of an element is bounded above by its depth, but it may be the case that elements of arbitrarily large depth have uniformly bounded retreat depth. Indeed, this is the case for the discrete Heisenberg group: Warshall proved that it has unbounded depth and at the same time uniformly bounded retreat depth, for any generating set \cite{warshall2011group}. On the other hand, the lamplighter group $\Z/2\Z\wr\Z$ and Houghton's group $\mathcal{H}_2$ have unbounded retreat depth for standard generating sets \cite{lehnert2009some}.

Throughout this article, we work with wreath products $A\wr B$ under the assumption that $A$ has an associated finite generating set $S_A$ with unbounded depth, and call it the \textit{lamps group} of the wreath product $A\wr B$. This generalizes the case of finite lamps groups, since finite groups always have elements of infinite depth. We also say that $B$ is the \textit{base group} of $A\wr B$ or that $A\wr B$ is a lamplighter group over $B$ with lamp groups $A$. 
%%%%%%%%%%%%%%%%%%%%%%%%%%%%%%%%%%%%%%%%%%%%%%%%%%%%%%%%%%%%%%%%%%%%%%%%%%  dead ends on finite and on trees
\section{Lamplighter groups over finite groups and over free groups}\label{section: dead ends on finite and free groups}
This section concerns two particular cases of base groups for which there is a detailed description of the solutions of the TSP, with respect to our purposes of studying depth on lamplighter groups. We show that when the base group is finite, the depth properties are dominated by the lamps group, while on the other hand when the base group is a free group with free generating set (so that the corresponding Cayley graph is a tree) we are able to give a precise characterization of dead end elements using the same arguments of \cite{ClearyTaback05}. 

In particular, it follows from Proposition \ref{prop: characterization of dead ends on lamplighters over free groups} that dead end elements of lamplighters over trees must necessarily have the position of the lamplighter at the identity element. This restriction does not hold in general for other base groups, and through our study of lamplighters over free products of finite groups in Section \ref{section: lamplighters over free products}, we find a lamplighter group with a standard generating set whose dead end elements can have the position of the lamplighter at an arbitrarily large distance from the identity element (Example \ref{example: free product unbounded depth arbitrarily far away position}).
\subsection{Lamplighters over finite groups}
We start by proving that when $B$ is finite, the lamplighter group $(A\wr B,\std)$ has the same depth properties as $(A,S_A)$. 
\begin{proposition}\label{prop: lamplighter over finite group deep dead ends if and only if lamps have deep dead ends} Consider $(A,S_A)$ any finitely generated group and $(B,S_B)$ a finite group. Then $(A\wr B,\std)$ has unbounded depth if and only if $(A,S_A)$ does. 
\end{proposition}
\begin{proof}
	
	Suppose first that $(A,S_A)$ has uniformly bounded depth, so that for some $k\ge 1$ and any $a\in A$, there exist $\alpha_1,\ldots,\alpha_k\in S_A\cup S_A^{-1}$ so that $\|a\alpha_1\cdots\alpha_k\|_{S_A}\ge \|a\|_{S_A}+1$.
	
	For any element $g=(f,x)\in A\wr B$, Equation \eqref{eq: express word length of wreath product} tells us that the word length of $g$ is
	$$
	\|g\|_{\std}=\|f\|_{S_A}+\TS{e_B}{x}{f}.
	$$
	For $a=f(x)$, find $\alpha_1,\ldots,\alpha_k\in S_A\cup S_A^{-1}$ so that$\|a\alpha_1\cdots\alpha_k\|_{S_A}\ge \|a\|_{S_A}+1$. Then
	
	$$
	g\alpha_1\cdots\alpha_k=(f\cdot x\alpha_1\cdots\alpha_kx^{-1},x),
	$$
	and hence
	\begin{align*}
	\|g\alpha_1\cdots\alpha_k\|_{\std}&=\|f\cdot x\alpha_1\cdots\alpha_kx^{-1}\|_{S_A}+\TS{e_B}{x}{f\cdot x\alpha_1\cdots\alpha_kx^{-1}}\\
	&\ge\|f\cdot x\alpha_1\cdots\alpha_kx^{-1}\|_{S_A}+\TS{e_B}{x}{f}\\
	&\ge \|f\|_{S_A}+1+\TS{e_B}{x}{f} \\
	&=\|g\|_{\std}+1,	
	\end{align*}
	where the second inequality comes from the fact that the support of $f$ is contained in that of $f$ with the state of the lamp at position $x$ modified. This shows that $g$ has depth at most $k$.
	
	Now suppose that $(A,S_A)$ has unbounded depth, and choose $a\in (A,S_A)$ of depth  at least $n$. Choose $x\in B$ that maximizes the value of $\TS{e_B}{x}{B}$, which exists since we assume $B$ to be finite. Then it is straightforward to prove that the element $(f,x)$, where $f(y)=a$ for all $y\in B$, is a dead end of depth at least $n$, by using Equation \eqref{eq: express word length of wreath product}. Indeed, thanks to our choice of $x$, the word length of $(f,x)$ can only be increased through the term associated to the lamps configuration, for which at least $n$ generators of $S_A$ are needed.
	
	Note that here it is essential that $B$ is finite, so that $f$ defined as above does indeed give a finitely supported function over $B$.
\end{proof}
For the rest of the article, we concentrate on lamplighter groups over infinite base groups $B$.
\subsection{Lamplighters over free groups}
Computing the word length of an arbitrary element using Equation \eqref{eq: express word length of wreath product} is not easy in general since it involves solving the TSP, known to be a computationally hard problem. However, an exceptionally simple formula can be given for $F(S)$ a free group with a free finite generating set $S$. In order to do so, we need to introduce some notation. Given $u,v\in F(S)$, denote by $[u,v]$ the set of edges of the unique shortest path in $\cay{F(S)}{S}$ joining $u$ to $v$, so that $d_{S}(u,v)=|[u,v]|$. Similarly, for $H\subseteq F(S)$ finite, denote $[u,H]=\bigcup_{h\in H}[u,h]$.
\begin{lemma}[{\cite[Theorem 3.1]{BaudierMotakisSchumprechtZsak2021}},\cite{ClearyTaback05}]\label{lem: word length lamplighter over tree} Let $S$ be a finite set and $F(S)$ the free group over $S$. Then for any $u,v\in F(S)$ and any finite subset $H\subseteq F(S)$ we have
	$$
	\TS{u}{v}{H}=2| [u,H]\backslash [u,v] |+|[u,v]|.
	$$
	
	In particular, for any element $g=(f,x)\in A\wr F(S)$, Equation \eqref{eq: express word length of wreath product} takes the form
	\begin{equation}\label{eq: word length wreath prod over tree}
	\|g\|_{\std}=\sum_{y\in\supp{f}}\|f(y)\|_{S_A}+2| [e_B,\supp{f}]\backslash [e_B,x] |+ \|x\|_S.
	\end{equation}
\end{lemma}

By using Equation \eqref{eq: word length wreath prod over tree} we can generalize the results about dead ends of lamplighter groups over the line from \cite{ClearyTaback05} and actually give a characterization of such elements.

\begin{proposition}\label{prop: characterization of dead ends on lamplighters over free groups} Consider the free group $F(S)$ over a finite set $S$, and a finitely generated group $(A,S_A)$. Then $g=(f,x)\in (A\wr F(S),\std)$ is a dead end if and only if
	\begin{enumerate}
		\item $f(x)\in (A,S_A)$ is a dead end,
		\item $[e_{F(S)},\supp{f}]$ contains all edges $[x,xs]$, for $s\in S^{\pm 1}$, and
		\item $x=e_{F(S)}$.
	\end{enumerate}
	Moreover, if $(A,S_A)$ has unbounded (retreat) depth, then $(A\wr F(S),\std)$ also does.
\end{proposition}

\begin{proof}
	Suppose first that the three conditions hold. As $x=e_{F(S)}$, Equation \eqref{eq: word length wreath prod over tree} says that
	$$
	\|g\|_{\std}=\sum_{y\in \supp{f}}\|f(y)\|_{S_A}+2|[e_{F(S)},\supp{f}]|.
	$$
	As $f(x)$ is a dead end of $(A,S_A)$, multiplying by a generator in $S_A$ does not increase word length. On the other hand, multiplying by a generator $s\in S$ gives
	\begin{align*}
	\|gs\|_{\std}&=\sum_{y\in \supp{f}}\|f(y)\|_{S_A}+2|[e_{F(S)},\supp{f}]\backslash[e_B,s]|+|[e_B,s]|\\
	&=\sum_{y\in \supp{f}}\|f(y)\|_{S_A}+2|[e_{F(S)},\supp{f}]\backslash[e_B,s]|+1\\
	&=\sum_{y\in \supp{f}}\|f(y)\|_{S_A}+2\left(|[e_{F(S)},\supp{f}]|-1\right)+1\\
	&=\|g\|_{\std}-1,
	\end{align*}
	where we used Condition (2) in the penultimate equality. This proves that $g$ is a dead end.

	Now let us suppose that $g=(f,x)\in (A\wr F(S),\std)$ is a dead end. Clearly Condition $(1)$ must hold, since otherwise multiplying by a generator in $S_A$ would increase word length. Now consider Condition (2). If there exists $s\in S^{\pm 1}$ such that the edge $[x,xs]$ is not contained in $[e_{F(S)},\supp{f}]$, then 
	\begin{align*}
	\|gs\|_{\std}&=\sum_{y\in \supp{f}}\|f(y)\|_{S_A}+2|[e_{F(S)},\supp{f}]\backslash[e_B,xs]|+|[e_B,xs]|\\
	&=\sum_{y\in \supp{f}}\|f(y)\|_{S_A}+2|[e_{F(S)},\supp{f}]\backslash[e_B,x]|+|[e_B,x]|+1\\
	&=\|g\|_{\std}+1,
	\end{align*}
	which contradicts the fact that $g$ is a dead end. Hence Condition $(2)$ holds.
	
	Similarly, suppose that $x\neq e_{F(S)}$ and choose $s\in S^{\pm 1}$ such that $\|xs\|_{S}=\|x\|_{S}-1$. Then
	\begin{align*}
	\|g\|_{\std}&=\sum_{y\in \supp{f}}\|f(y)\|_{S_A}+2|[e_{F(S)},\supp{f}]\backslash[e_B,x]|+|[e_B,x]|\\
	&=\sum_{y\in \supp{f}}\|f(y)\|_{S_A}+2\left(|[e_{F(S)},\supp{f}]\backslash[e_B,xs]|-1\right)+|[e_B,xs]|+1\\
	&=\|gs\|_{\std}-1.
	\end{align*}
	
	In other words, $\|gs\|_{\std}=\|g\|_{\std}+1$ and we again contradict that $g$ is a dead end.

	Now let us prove the second part of the proposition. If $(A,S_A)$ has unbounded (retreat) depth, fix for $n\ge 1$ an element $a\in (A,S_A)$ of (retreat) depth $n$. Define the element $g=(f,e_{F(S)})\in A\wr F(S)$, where $f(x)=a$ if $\|x\|_S\le n$ and $f(x)=e_A$ otherwise. Then very similar arguments to the ones given above show that for any $s_1,\ldots,s_{n-1}\in S^{\pm 1},$
	
	$$
	\|gs_1\ldots s_{n-1}\|_{\std}=\|g\|_{\std}-(n-1).
	$$
	
	This implies that $g$ will be a dead end of (retreat) depth at least $n-1$ with respect to $\std$.	
\end{proof}
%%%%%%%%%%%%%%%%%%%%%%%%%%%%%%%%%%%%%%%%%%%%%%%%%%%%%%%%%%%%%%%%%%%%%%%%%%  qH property
\section{The quasi-Hamiltonian property}\label{section:quasi-hamiltonian property}
Equation \eqref{eq: express word length of wreath product} tells us that in order to study word length of $(A\wr B,\std)$, we need to understand (at least partially) the solutions to the TSP in $\cay{B}{S_B}$. This problem is in general NP-hard, and hence we cannot hope to have a precise description of all solutions unless we are in very particular families of graphs (such as trees, which were studied in the previous section).

However, our focus on depth of lamplighter groups brings our attention onto very particular instances of the TSP. By looking at the structure of dead ends of lamplighters over trees given by Proposition \ref{prop: characterization of dead ends on lamplighters over free groups}, a naive approach to finding dead end elements in $A\wr B$ is to consider configurations of the form $g=(f,e_B)\in A\wr B$, where $f$ is a lamps configuration with support on a ball of radius $n$ centered at $e_B$. Hence, it is relevant for us to study the solutions to the TSP starting at $e_B$ and visiting all vertices in a big ball around the identity.

In Subsection \ref{subsection: qH motivation and examples} we show some examples of solutions to the TSP in some Cayley graphs, which serve as motivation for the rest of this section. Then in Subsection \ref{subsection: qH prop and unbounded depth} we define the quasi-Hamiltonian property for a graph and show that any group admits a quasi-Hamiltonian Cayley graph. As a consequence, any lamplighter group has a standard generating set with unbounded depth. Finally, in Subsection \ref{subsection: abelian groups} we prove that any Cayley graph of an abelian group, except for $\cay{\Z}{\{\pm 1\}}$ is quasi-Hamiltonian.  As a corollary, all standard generating sets of lamplighters over abelian groups have unbounded depth.
\subsection{Examples: solutions to the TSP inside some Cayley graphs}\label{subsection: qH motivation and examples}

\begin{example}\label{example: qH Z^2}
	Consider the group $\Z^2$ together with its canonical basis $\{\mathbf{e}_1,\mathbf{e}_2\}$ of $\Z^2$, and define the \textit{king's moves} generating set $S_{\mathrm{king}}=\{\pm\mathbf{e}_1,\pm\mathbf{e}_2,\mathbf{e}_1\pm\mathbf{e}_2,-\mathbf{e}_1\pm \mathbf{e}_2\}$. Then balls of $\cay{\Z^2}{S_{\mathrm{king}}}$ have the shape of squares, and it can be proved that the induced subgraphs $B_n\coloneqq B_{S_{\mathrm{king}}}(\mathbf{0},n)$ are Hamiltonian-connected for any $n\ge 1$. Hence for any $x\in B_n$ we have
	$$
	\TS{\mathbf{0}}{x}{B_n}=|B_n|+\delta_{x,\mathbf{0}},
	$$
	where $\delta_{\mathbf{0},\mathbf{0}}=1$ and $\delta_{x,\mathbf{0}}=0$ otherwise. Indeed, the shortest path from $\mathbf{0}$ to $x$ that covers $B_n$ visits each vertex exactly once, except possibly $\mathbf{0}$ which is visited twice if $x=\mathbf{0}$. Two such solutions are illustrated in Figure \ref{fig: Z^2 hamiltonian paths}.
	\begin{figure}[h!]
		\centering
		\begin{subfigure}[t]{0.5\textwidth}
			\centering
			\resizebox{0.95\textwidth}{!}{
				\begin{tikzpicture}
[
my star/.append style={star, draw, star points=5,minimum size=3mm,inner sep=0, star point ratio=2,scale=0.8, every node/.style={scale=0.8}}
]
\draw[step=0.5cm,black,very thin,opacity=0.2] (0,0) grid (10,10);
\draw[thick,->](0,5) -- (10,5);
\draw[thick,->](5,0) -- (5,10);
\node[below right, inner sep=7pt] at (8.5,5.5) {$n$};
\node[below left, inner sep=7pt] at (1.5,5.5) {$-n$};
\node[above left, inner sep=7pt] at (5.5,8.5) {$n$};
\node[below left, inner sep=7pt] at (5.5,1.5) {$-n$};
\begin{scope}[ultra thick,decoration={
	markings,
	mark=at position 0.01 with {\arrow{>}},
	mark=at position 0.05 with {\arrow{>}},
	mark=at position 0.15 with {\arrow{>}},
	mark=at position 0.26 with {\arrow{>}},
	mark=at position 0.37 with {\arrow{>}},
	mark=at position 0.46 with {\arrow{>}},
	mark=at position 0.57 with {\arrow{>}},
	mark=at position 0.665 with {\arrow{>}},
	mark=at position 0.76 with {\arrow{>}},
	mark=at position 0.84 with {\arrow{>}},
	mark=at position 0.94 with {\arrow{>}}}
] 
\draw[postaction={decorate},color=blue!50!black] (5,5) -- (5,8.5) -- (1.5,8.5) -- (1.5,1.5) 
-- (8.5,1.5) -- (8.5,2) -- (2,2) -- (2,8) -- (2.5,8) -- (2.5,2.5) -- (8.5,2.5) -- (8.5,3) -- (3,3) 
-- (3,8) -- (3.5,8) -- (3.5,3.5) -- (8.5,3.5) -- (8.5,4)-- (4,4) -- (4,8) -- (4.5, 8) -- (4.5,4.5)
-- (8.5,4.5) -- (8.5,5) -- (8.5,8.5) -- (5.5,8.5) -- (5.5,8) -- (8,8) -- (8,5) -- (7.5,5) 
-- (7.5,7.5) -- (5.5,7.5) -- (5.5,7) -- (7,7) -- (7,5) -- (6.5,5) -- (6.5,6.5) -- (5.5,6.5)  -- (5.5,6) -- (6,6)
-- (6,5) -- (5.5,5)  -- (5.5,5.5) -- (5,5) ; 
%\draw[postaction={decorate},color=blue!50!black] (5,5) -- (5,8.5) -- (8.5,8.5) -- (8.5,1.5) -- (1.5,1.5) -- (1.5, 8.5) -- (4.5,8.5) -- (4.5,8) -- (2,8) -- (2,2) -- (8,2) -- (8,8) -- (5.5,8) -- (5.5,7.5) 
%-- (7.5,7.5) -- (7.5,2.5) -- (2.5,2.5) -- (2.5,7.5) -- (4.5,7.5) -- (4.5,7) --  (3,7)  -- (3,3) 
%-- (7,3) -- (7,7) -- (6.5,7) -- (6.5,6.5) -- (6.5,6.5) -- (6.5,3.5) -- (3.5,3.5) -- (3.5,6.5) 
%-- (4.5,6.5) -- (4.5,6) -- (4,6) -- (4,5.5) -- (4.5,5.5) -- (4.5,5) -- (4,5) -- (4,4) -- (6,4) -- (6,7)
%-- (5.5,7) -- (5.5,4.5) -- (4.5,4.5)  ;
\fill (5,5) circle[radius=3pt];
%\fill (7,7) circle[radius=3pt];
%\draw[postaction={decorate},color=blue!50!black] (5,4.9) -- (7.8,4.9) -- (7.8,2.2) -- (2.2,2.2) --
%(2.2,7.8) -- (7.8,7.8) -- (7.8,5.5) -- (7,5.5) -- (7,6) ;

\end{scope}
\end{tikzpicture}}
			\caption{Ending at $p=(0,0)$.}
		\end{subfigure}%
		~ 
		\begin{subfigure}[t]{0.5\textwidth}
			\centering
			\resizebox{0.95\textwidth}{!}{
				\begin{tikzpicture}
[
my star/.append style={star, draw, star points=5,minimum size=3mm,inner sep=0, star point ratio=2}
]
\draw[step=0.5cm,black,very thin,opacity=0.2] (0,0) grid (10,10);
\draw[thick,->](0,5) -- (10,5);
\draw[thick,->](5,0) -- (5,10);
\node[below right, inner sep=7pt] at (8.5,5.5) {$n$};
\node[below left, inner sep=7pt] at (1.5,5.5) {$-n$};
\node[above left, inner sep=7pt] at (5.5,8.5) {$n$};
\node[below left, inner sep=7pt] at (5.5,1.5) {$-n$};
\begin{scope}[ultra thick,decoration={
	markings,
	mark=at position 0.01 with {\arrow{>}},
	mark=at position 0.05 with {\arrow{>}},
	mark=at position 0.15 with {\arrow{>}},
	mark=at position 0.26 with {\arrow{>}},
	mark=at position 0.37 with {\arrow{>}},
	mark=at position 0.46 with {\arrow{>}},
	mark=at position 0.57 with {\arrow{>}},
	mark=at position 0.665 with {\arrow{>}},
	mark=at position 0.76 with {\arrow{>}},
	mark=at position 0.84 with {\arrow{>}},
	mark=at position 0.94 with {\arrow{>}}}
] 
\draw[postaction={decorate},color=blue!50!black] (5,5) -- (5,8.5) -- (1.5,8.5) -- (1.5,1.5) 
-- (8.5,1.5) -- (8.5,2) -- (2,2) -- (2,8) -- (2.5,8) -- (2.5,2.5) -- (8.5,2.5) -- (8.5,3) -- (3,3) 
-- (3,8) -- (3.5,8) -- (3.5,3.5) -- (8.5,3.5) -- (8.5,4)-- (4,4) -- (4,8) -- (4.5, 8) -- (4.5,4.5)
-- (8.5,4.5) -- (8.5,5) -- (8.5,8.5) -- (5.5,8.5) -- (5.5,8) -- (8,8) -- (8,5) -- (7.5,5) -- (5.5,5)
-- (5.5,7.5) -- (6,7.5) -- (6,5.5) -- (7.5,5.5) -- (7.5,6) -- (6.5,6) -- (6.5,6.5) -- (7.5,6.5)
-- (7.5,7.5) -- (6.5,7.5) -- (6.5,7) -- (7,7);
%\draw[postaction={decorate},color=blue!50!black] (5,5) -- (5,8.5) -- (8.5,8.5) -- (8.5,1.5) -- (1.5,1.5) -- (1.5, 8.5) -- (4.5,8.5) -- (4.5,8) -- (2,8) -- (2,2) -- (8,2) -- (8,8) -- (5.5,8) -- (5.5,7.5) 
%-- (7.5,7.5) -- (7.5,2.5) -- (2.5,2.5) -- (2.5,7.5) -- (4.5,7.5) -- (4.5,7) --  (3,7)  -- (3,3) 
%-- (7,3) -- (7,7) -- (6.5,7) -- (6.5,6.5) -- (6.5,6.5) -- (6.5,3.5) -- (3.5,3.5) -- (3.5,6.5) 
%-- (4.5,6.5) -- (4.5,6) -- (4,6) -- (4,5.5) -- (4.5,5.5) -- (4.5,5) -- (4,5) -- (4,4) -- (6,4) -- (6,7)
%-- (5.5,7) -- (5.5,4.5) -- (4.5,4.5)  ;
\fill (5,5) circle[radius=3pt];
\fill (7,7) circle[radius=3pt];
%\draw[postaction={decorate},color=blue!50!black] (5,4.9) -- (7.8,4.9) -- (7.8,2.2) -- (2.2,2.2) --
%(2.2,7.8) -- (7.8,7.8) -- (7.8,5.5) -- (7,5.5) -- (7,6) ;

\end{scope}
\end{tikzpicture}}
			\caption{Ending at $\mathbf{p}=(4,4)$.}
		\end{subfigure}
		\caption{Paths visiting all vertices in the square $[-n,n]^2$, starting at $(0,0)$ and finishing inside the square.}
		\label{fig: Z^2 hamiltonian paths}
	\end{figure}
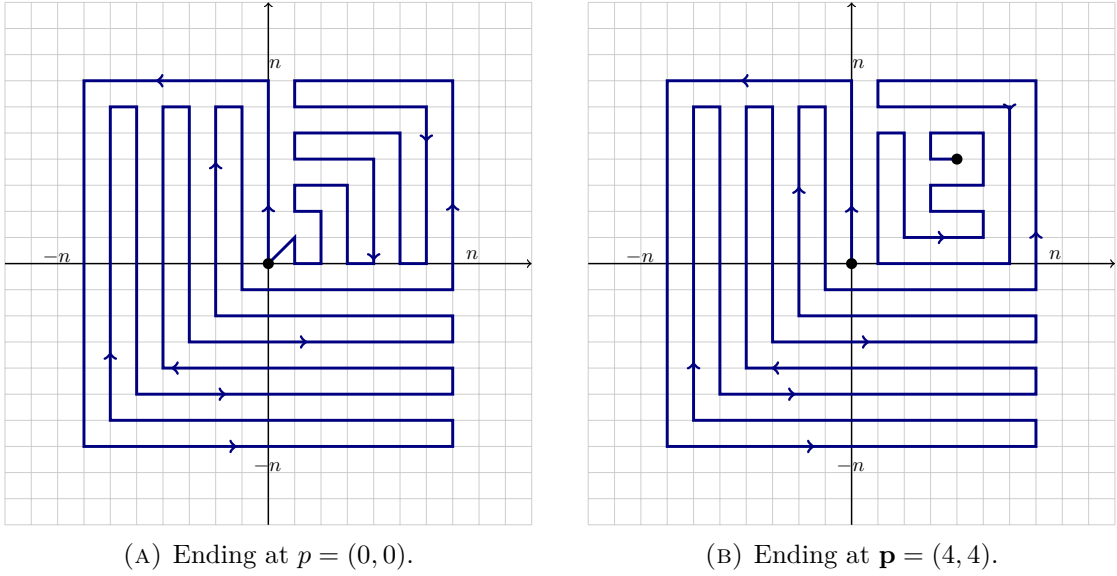
\end{example}

\begin{example}\label{example: qH Z with {1,2}}
	Now we consider $\Z$ with the generating set $S=\{\pm1 ,\pm2 \}$. The Cayley graph $\cay{\Z}{S}$ is illustrated in Figure \ref{fig: Cayleygraph of (Z,{1,2}) is not Hamiltonian-connected}. 	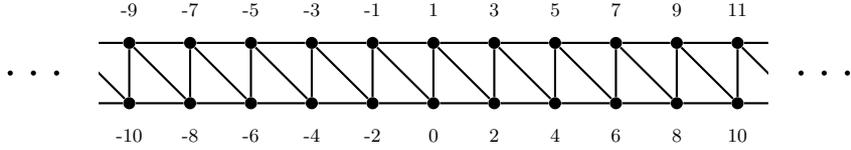
\begin{figure}[h!]
		\centering
		\begin{tikzpicture}[dot/.style={circle,inner sep=2pt,fill},scale=0.8, every node/.style={scale=0.8}]
\node at (-6.5,0.5) {\Huge $\dots$};
\node at (6.5,0.5) {\Huge $\dots$};
\clip (-5.5,-1) rectangle (5.5,2);
\foreach \x in {0,2,4,6,8,10,12}{
	\node[dot,label={[label distance=0.2cm]-90:\footnotesize \x}] (\x) at (\x*0.5,0) {};
}
\foreach \x in {-2,-4,-6,-8,-10,-12}{
	\node[dot,label={[label distance=0.2cm]-90:\footnotesize \x}] (\x) at (\x*0.5,0) {};
}
\foreach \x[evaluate=\x as \y using {int(\x+1)}] in {0,2,4,6,8,10,12}{
	\node[dot,label={[label distance=0.2cm]90:\footnotesize \y}] (\x+1) at (\x*0.5,1) {};
}
\foreach \x[evaluate=\x as \y using {int(\x+1)}] in {-2,-4,-6,-8,-10,-12}{
	\node[dot,label={[label distance=0.2cm]90:\footnotesize \y}] (\x+1) at (\x*0.5,1) {};
}

\foreach \x [evaluate=\x as \y using {int(\x+2)}] in {-12,-10,...,10}{
	\draw[thick,-](\x) -- (\y);
	\draw[thick,-] (\x) -- (\x+1);
}
	\draw[thick,-] (12) -- (12+1);
\foreach \x[evaluate=\x as \y using {int(\x+2)}] in {-12,-10,...,10}{
	\draw[thick,-](\x+1) -- (\y+1);
}
\foreach \x[evaluate=\x as \y using {int(\x+2)}] in {-12,-10,...,10}{
	\draw[thick,-](\y) -- (\x+1);
}

\end{tikzpicture}
		\caption{The Cayley graph $\cay{\Z}{\{\pm 1,\pm 2\}}$.}
		\label{fig: Cayleygraph of (Z,{1,2}) is not Hamiltonian-connected}
	\end{figure}

	The ball $B_n$ of radius $n$ centered at $0$ is not Hamiltonian-connected, since a path going from $0$ to $1$ and visiting all vertices of $B_n$ must visit one vertex twice. However, it does hold that
	$$
	|B_n|\le \TS{0}{x}{B_n}\le |B_n|+1,
	$$
	for any $x\in B_n$. Hence, even though the graph is not Hamiltonian-connected, solutions to the TSP starting at $0$ and visiting all of $B_n$ differ from a hypothetical Hamiltonian path only by a uniform additive constant.
	
\end{example}

\begin{example}\label{example: qH free group}
	Now consider a finite non-empty set $S$ and the free group $F(S)$. In this case the Cayley graph $\cay{F(S)}{S}$ is a tree and for any $x\in B_n\coloneqq B_S(e_{F(S)},n)$,
	$$
	\TS{e_{F(S)}}{x}{B_n}\ge n+|B_n|.
	$$
	Indeed, a path visiting all vertices in the ball $B_n$ must pass through $e_{F(S)}$ at least twice, and as $\cay{F(S)}{S}$ is a tree this implies that it traverses a geodesic from $e_{F(S)}$ to an element of word length $n$ twice. 
\end{example}

In the first two examples, minimal spanning paths of $B_n$ repeat a constant number of vertices, while on the third one it is necessary to repeat an unbounded number of them. On what follows we study these different behaviors and their consequences of depth properties of lamplighter groups.

\subsection{The quasi-Hamiltonian property and unbounded depth of lamplighters}\label{subsection: qH prop and unbounded depth}
We start by defining the property of $\cay{B}{S_B}$ that will be a sufficient condition for the existence of dead ends of unbounded depth in $A\wr B$, and which we believe to be of interest on its own.

\begin{definition}[Quasi-Hamiltonian property]\label{def: qH property} Let $\Gamma$ be an infinite, connected and locally finite graph, and fix a vertex $o\in V$. Denote by $B_n$ the ball of $\Gamma$ centered at $o$ of radius $n$. We say that $(\Gamma,o)$ has the \textit{quasi-Hamiltonian property} or that it is \textit{quasi-Hamiltonian} if there exists a family $\mathcal{F}$ of connected (induced) finite subgraphs of $\Gamma$ such that
	\begin{enumerate}
		\item $o\in F$ for every $F\in \mathcal{F}$,
		\item for any $n\ge 1$, there exists $F\in \mathcal{F}$ such that $B_n\subseteq F$, and
		\item there exists a constant $M\ge 0$ such that for any $F\in \mathcal{F}$ and $x\in F$,
		$$
		\TS{o}{x}{F}\le  |F|+M.
		$$
	\end{enumerate}
\end{definition}
We concentrate on the case where $\Gamma=\cay{B}{S_B}$, and $o=e_B$. 

Note that Definition \ref{def: qH property} implies that for any $F\in \mathcal{F}$ and $x\in F$,
$$
|F|\le\TS{e_B}{x}{F}\le |F|+M,
$$
so that the lengths of optimal paths are at bounded distance from those of hypothetical Hamiltonian paths.

\begin{lemma}\label{lem: Hamiltonian sequence implies unbounded depth}
	If $\cay{B}{S_B}$ is quasi-Hamiltonian, then for every group $(A,S_A)$ of unbounded depth the corresponding lamplighter group $\displaystyle\left (A \wr B,\std \right)$ has unbounded depth.
\end{lemma}
\begin{proof}
	
	Since $\cay{B}{S_B}$ has the quasi-Hamiltonian property, there exists $M\ge 0$ such that for any $n\ge 1$ we can find $F\subseteq B$ a connected subgraph with $B_{S_B}(e_B,n)\subseteq F$ and with 
	$$
	|F|\le\TS{e_B}{p}{F}\le |F|+M,
	$$
	for any $p\in F$.
	
	Fix a dead end $a\in A$ of depth at least $n$ with respect to $S_A$, and consider the configuration $g=(f,e_B)\in A \wr B$, where $f(x)=a$ if $x\in F$ and $f(x)=e_A$ otherwise. We see that $$\|g\|_{\std}\ge \sum_{a\in F}\|a\|_{S_A} +|F|,$$ since in order to light all lamps at vertices of $F$ it is mandatory to visit each of these elements at least once.
	
	Now consider any element $h\in B_{\std}(e,n)$, and note that the element $gh$ corresponds to a new lamplighter configuration where some of the lamps at positions of $F$ may have changed (by at most $n$ generators), and the new position of the lamplighter is some element $p\in B_{S_B}(e_B,n)\subseteq F$. Such an element can be constructed using the generators of $\std$ by following a spanning path of $F$ starting at $e_B$ and finishing at $p$, while generating the states of the lamps at configurations of $F$. We see hence that
	$$
	\|gh\|_{\std}\le \sum_{a\in F}\|a\|_{S_A}+ |F|+M \le \|g\|_{\std}+M.
	$$
	This inequality together with the Fuzz Lemma \ref{lem: Fuzz Lemma} prove the existence of a dead end of depth at least $n/M$. As $M$ does not depend on $n$, we conclude that $(A\wr B,\std)$ has dead ends of unbounded depth.
\end{proof}

\begin{remark}\label{remark: super generating sets and hamiltonian sequences} Suppose we have two generating sets $S,S'$ of $B$, with $S\subseteq S'$. By noting that $\cay{B}{S'}$ can be obtained from $\cay{B}{S}$ by adding a finite number of extra edges at each vertex, we see that if $(B,S)$ has the quasi-Hamiltonian property then so does $(B,S').$ 
	
	More generally, if $\Gamma$ is any subgraph obtained from $\cay{B}{S}$ by removing edges (but not vertices) and $\Gamma$ has the quasi-Hamiltonian property, then so does $\cay{B}{S}$.
\end{remark}

\subsection{Existence of standard generating sets with unbounded depth}
Now we prove that any infinite group admits a quasi-Hamiltonian Cayley graph. In order to do so, we use Lemma \ref{lem: the cube of a connected graph is Hamiltonian connected}, which tells us that the cube of any connected finite graph is Hamiltonian-connected. This result has been used in a similar manner by Georgakopoulos in order to prove the existence of generating sets with Hamiltonian circles in Cayley graphs on infinite groups \cite{GeorgakopoulosHamiltonian2009}, and by Khukhro \cite{Khukhro2020} and Ostrovskii and Rosenthal \cite{OstrovskiiRosenthal2015} in order to show that non virtually free groups admit Cayley graphs which have any finite graph as a minor.

\begin{lemma}[Existence of a quasi-Hamiltonian Cayley graphs]\label{lem: existence of genset with quasiHamiltonian sequence} Any finitely generated group $B$ admits a quasi-Hamiltonian Cayley graph.
\end{lemma}
\begin{proof}
	Start with any symmetric finite generating set $S$ for $B$, and for an arbitrary $n\ge 1$ consider $F=B_S(e_B,3n)$. Thanks to Lemma \ref{lem: the cube of a connected graph is Hamiltonian connected}, the cube of the induced graph by $F$ in $\cay{B}{S}$ is Hamiltonian-connected, and hence $F$ is a Hamiltonian-connected subset of $\cay{B}{S\cup S^2 \cup S^3}$.
	
	Denote this new generating set $S_B\coloneqq S\cup S^2 \cup S^3$. By definition, $F=B_{S_B}(e_B,n)$ is a Hamiltonian-connected subgraph. 
	
	As $n$ was arbitrary, this proves that $\cay{B}{S_B}$ has the quasi-Hamiltonian property.
\end{proof}

\begin{thm}\label{thm: existence of genset with deep dead ends in any lamplighter} Let $A$ be a finitely generated group with unbounded depth for some finite generating set, and $B$ be any finitely generated group. Then there exists a standard generating set of $A\wr B$ with unbounded depth.
\end{thm}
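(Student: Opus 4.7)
The plan is to reduce the theorem to the two technical results already in hand, namely Lemma~\ref{lem: existence of genset with quasiHamiltonian sequence} (every finitely generated group admits a quasi-Hamiltonian presentation) and Lemma~\ref{lem: Hamiltonian sequence implies unbounded depth} (a quasi-Hamiltonian presentation of the base group transfers unbounded depth from the lamps group to the lamplighter). Once these are combined in the right way, almost nothing else needs to be done; the main issue is organizing the case analysis and choosing the correct generating set $S_B$.

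First I would split according to whether $B$ is finite or infinite, because the quasi-Hamiltonian property is only defined for infinite locally finite graphs. If $B$ is finite, I would pick any finite symmetric generating set $S_B$ of $B$ and appeal directly to Proposition~\ref{prop: lamplighter over finite group deep dead ends if and only if lamps have deep dead ends}: since $(A,S_A)$ has unbounded depth by hypothesis, so does $(A\wr B,\std)$ with $\std=S_A\cup S_B$.

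If $B$ is infinite, I would invoke Lemma~\ref{lem: existence of genset with quasiHamiltonian sequence} to obtain a finite symmetric generating set $S_B$ of $B$ such that $\cay{B}{S_B}$ has the quasi-Hamiltonian property. Concretely, starting from any symmetric generating set $S$ one may take $S_B=S\cup S^2\cup S^3$, so that $S_A\cup S_B$ is a finite symmetric standard generating set of $A\wr B$. With this choice, Lemma~\ref{lem: Hamiltonian sequence implies unbounded depth} applied to $(A,S_A)$ and $(B,S_B)$ immediately yields that $(A\wr B,\std)$ has unbounded depth.

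The only step that requires any care is making sure that the generating set produced in the infinite case is genuinely a standard generating set of $A\wr B$ in the sense of Subsection~\ref{subsection:wreath prods}, i.e.\ of the form $S_A\cup S_B$ with $S_A,S_B$ generating sets of $A$ and $B$ respectively. Since $S_B$ from Lemma~\ref{lem: existence of genset with quasiHamiltonian sequence} is a finite symmetric generating set of $B$ and $S_A$ is the given finite generating set of $A$ with unbounded depth, this condition is automatic. There is no real obstacle beyond assembling the two lemmas and treating the finite-$B$ degeneracy separately.
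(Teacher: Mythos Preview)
Your proposal is correct and follows essentially the same approach as the paper: combine Lemma~\ref{lem: existence of genset with quasiHamiltonian sequence} with Lemma~\ref{lem: Hamiltonian sequence implies unbounded depth}. Your explicit case split for finite $B$ (handled via Proposition~\ref{prop: lamplighter over finite group deep dead ends if and only if lamps have deep dead ends}) is a small elaboration the paper omits, but it is a reasonable precaution since Definition~\ref{def: qH property} is stated only for infinite graphs.
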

\begin{proof}
	Thanks to Lemma \ref{lem: Hamiltonian sequence implies unbounded depth}, the result follows from the existence of a quasi-Hamiltonian presentation given by Lemma \ref{lem: existence of genset with quasiHamiltonian sequence}.
\end{proof}

\begin{example} Let $S$ be a finite set with at least two elements, and consider the free group $F(S)$. It follows from Example \ref{example: qH free group} that $\cay{F(S)}{S}$ does not have the quasi-Hamiltonian property, and from Lemma \ref{lem: existence of genset with quasiHamiltonian sequence} that $\cay{F(S)}{S\cup S^2 \cup S^3}$ does. It is natural to ask about what happens for $\cay{F(S)}{S\cup S^2}$, and the answer is that it does not have the quasi-Hamiltonian property.
	
	Indeed, let $H$ be a subset of $F(S)$ that contains a ball of radius $n$, and let $\gamma$ be a spanning cycle of $H$ of length $\TS{e_{F(S)}}{e_{F(S)}}{H}$. Doing a case by case analysis, it is possible to see that for any element $v\in F(S)$ with $\|v\|_S\le n$, the path $\gamma$ must repeat at least one vertex in its $1$-neighborhood. This means that  $\TS{e_{F(S)}}{e_{F(S)}}{H}-|H| \xrightarrow[n \to +\infty]{} +\infty$, and so $(F(S),S\cup S^2)$ is not a quasi-Hamiltonian presentation.
	
	For this example to work, it is essential that $|S|\ge 2$. As we showed in Example \ref{example: qH Z with {1,2}}, $\cay{\Z}{\{1,2\}}$ does have the quasi-Hamiltonian property.
\end{example}

\subsection{Abelian groups}\label{subsection: abelian groups}

In this subsection, we prove that any Cayley graph of a finitely generated abelian group different from $\cay{\Z}{\{\pm 1\}}$ is quasi-Hamiltonian. Since the lamplighter group over $\cay{\Z}{\{\pm 1\}}$ is covered by Cleary and Taback's original example in \cite{ClearyTaback05}, we conclude that any lamplighter group $A\wr B$ over an abelian base group $B$ has unbounded depth, with respect to every standard generating set $\std=S_A\cup S_B$, as long as the lamps group $(A,S_A)$ has unbounded depth.

We begin by sketching the proof of the fact that every Cayley graph of an abelian group other than $\cay{\Z}{\pm 1}$ has the quasi-Hamiltonian property. Our starting point are ``grid graphs'', that is, graphs whose vertex set is $$\{1,\ldots,n\}\times \{1,\ldots,m\}, \text{ for } n,m\ge 2,$$ and where edges connect vertices of the form $(i,j)$ with $(k,l)$ if and only if $|i-k|+|j-l|=1$, for $1\le i,k\le n$ and $1\le j,l\le m$. Hamiltonian paths in such graph have been studied by Itai, Papadimitiou and Szwarcfiter \cite{ItaiPapadimitriouSzwarcfiter1982}, and their results imply that between any two vertices there is a spanning path that repeats at most $2$ elements. This shows that the Cayley graph of $\Z^2$ with standard generators, as well as the graph $\Z\times \{1,\ldots, n\}$, $n\ge 2$, have the quasi-Hamiltonian property. The rest of the proof consists of showing that any Cayley graph of an abelian group $G$, other than $\cay{\Z}{\{\pm 1\}}$, contains one of the above graphs as a spanning subgraph. In other words, that for any Cayley graph $\cay{G}{S}$ as above, there exists a bijective $1$-Lipschitz embedding whose domain is one of the graphs $\Z^2$ or $\Z\times \{1,\ldots,n\}$, for some $n\ge 2$. This will follow from an inductive argument, based on the proofs of Vászonyi \cite{Vazsonyi1939} and Nash-Williams \cite{NashWilliams1959} of the existence of Hamiltonian double rays in the Cayley graph of any abelian group.

In order to formulate our results, we introduce some notation. Given $m\ge 1$, we denote by $I_{m}$ the interval graph on $m$ vertices. That is, the graph whose vertex set is $\{1,\ldots,m\}$ and where edges connect $i$ with $i+1$, for $0\le i<m$. More generally, for any integers $m_1,\ldots,m_s\ge 1$, we use the notation

\[\mathrm{Cube}(m_1,\ldots,m_s)\coloneqq I_{m_1}\times \cdots\times I_{m_s}\]

for the product graph of all the $I_{m_i}$'s (see Definition \ref{def: product graph}). When $s=2$, we call $\mathrm{Cube}(m_1,m_2)$ a \textit{grid graph}.

The existence of Hamiltonian paths between two vertices of a grid graph $\mathrm{Cube}(m_1,m_2)$ is studied in \cite{ItaiPapadimitriouSzwarcfiter1982}. In particular, it is shown that obstructions to Hamiltonian-connectedness arise either from a parity issue, or by some particular configurations when either $m_1$ or $m_2$ are at most $3$. Examples of such non-Hamiltonian-connected grid graphs are illustrated in Figure \ref{fig: grid graphs pathological}.
\begin{figure*}[h!]
	\centering
	\begin{subfigure}[t]{0.4\textwidth}
		\centering
		\resizebox{1\textwidth}{!}{
			\begin{tikzpicture}[dot/.style={circle,inner sep=1.5pt,fill}]

\foreach \x in {0,1,...,5}{
	\foreach \a in {0,1,2}{
	\node[dot] (\x\a) at (\x,\a) {};
}}
\foreach \x [evaluate=\x as \y using {int(\x+1)}] in {0,1,...,4}{
	\foreach \a [evaluate=\a as \b using {int(\a+1)}] in {0,1}{
	\draw[thick,-](\x\a) -- (\x\b);
	\draw[thick,-](\x\a) -- (\y\a);
}
}
\foreach \x [evaluate=\x as \y using {int(\x+1)}] in {0,1,...,4}{
		\draw[thick,-](\x2) -- (\y2);
}
\foreach \a [evaluate=\a as \b using {int(\a+1)}] in {0,1}{
		\draw[thick,-](5\a) -- (5\b);
	}
\node at ([shift={(0.2,0.2)}]10) {$u$};
\node at ([shift={(0.2,0.2)}]41) {$v$};
\end{tikzpicture}}
		\caption{}
	\end{subfigure}%
	~ 
	\begin{subfigure}[t]{0.4\textwidth}
		\centering
		\resizebox{1\textwidth}{!}{
			\begin{tikzpicture}[dot/.style={circle,inner sep=1.5pt,fill}]

\foreach \x in {0,1,...,5}{
	\foreach \a in {0,1,2}{
		\node[dot] (\x\a) at (\x,\a) {};
}}
\foreach \x [evaluate=\x as \y using {int(\x+1)}] in {0,1,...,4}{
	\foreach \a [evaluate=\a as \b using {int(\a+1)}] in {0,1}{
		\draw[thick,-](\x\a) -- (\x\b);
		\draw[thick,-](\x\a) -- (\y\a);
	}
}
\foreach \x [evaluate=\x as \y using {int(\x+1)}] in {0,1,...,4}{
	\draw[thick,-](\x2) -- (\y2);
}
\foreach \a [evaluate=\a as \b using {int(\a+1)}] in {0,1}{
	\draw[thick,-](5\a) -- (5\b);
}
\node at ([shift={(0.2,0.2)}]21) {$u$};
\node at ([shift={(0.2,0.2)}]31) {$v$};
\end{tikzpicture}}
		\caption{}
	\end{subfigure}
	\caption{There are no Hamiltonian paths from $u$ to $v$ in these grid graphs.}
	\label{fig: grid graphs pathological}
\end{figure*}
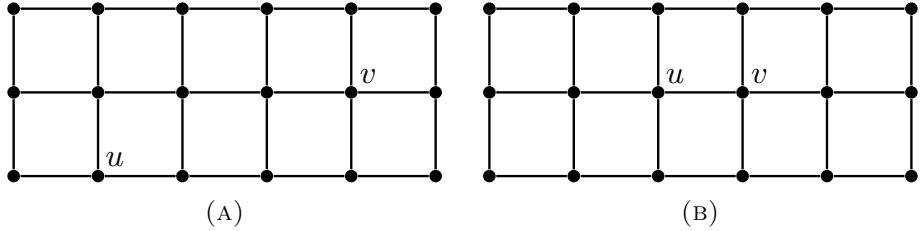

\begin{lemma}[Hamiltonian-connectivity of grid graphs]\label{lem: Itai grid graphs} Consider $m_1,m_2\ge 2$. Then between any two vertices $s,t\in \mathrm{Cube}(m_1,m_2)$, there exists a spanning path of $\mathrm{Cube}(m_1,m_2)$ of length at most $|\mathrm{Cube}(m_1,m_2)|+2$.
\end{lemma}
\begin{proof}
	A necessary and sufficient condition for the existence of a Hamiltonian path of $\mathrm{Cube}(m_1,m_2)$ between two vertices is provided in \cite[Theorem 3.2]{ItaiPapadimitriouSzwarcfiter1982}, which depends only on whether the grid graph is bipartite, and on some particular cases where $m_1\le 3$ or $m_2\le 3$. This result implies that for any two pair of vertices $s,t\in \mathrm{Cube}(m_1,m_2)$, there is either a Hamiltonian path from $s$ to $t$, or a Hamiltonian path from $s$ to a vertex at distance at most $2$ from $t$.
\end{proof}

Now we prove the higher-dimensional version of Lemma \ref{lem: Itai grid graphs}.

\begin{lemma}\label{lem: rectangles are quasi-Hamiltonian-connected} For any $r,m_1,\ldots,m_s\ge 1$, consider the graph 
	$$\Gamma=\Z^r\times \mathrm{Cube}(m_1,\ldots,m_s),$$
	where $\Z^r$ is identified with its Cayley graph with respect to canonical generators.

	Suppose that $\Gamma$ is not a line, that is, either $r\ge 2$, or $r=1$ and $s\ge 1$. Then there exists a constant $M\ge 0$ such that for any $n_1,\ldots,n_r\ge 1$, the induced subgraph
	$$
	R=\mathrm{Cube}(n_1,\ldots,n_r)\times  \mathrm{Cube}(m_1,\ldots,m_s)\subseteq \Z^r\times \mathrm{Cube}(m_1,\ldots,m_s)
	$$
	has a spanning path of length at most $|R|+M$ between any two vertices $s,t\in R$.
\end{lemma}
\begin{proof}
	We start by noting that if $r=s=1$ then $\Gamma\cong\Z\times I_{m_1}$, and that if $r=2$ and $s=0$ then $\Gamma\cong\Z^2$. In both cases $R$ is isomorphic to a grid graph of appropriate dimensions, and hence the result follows from Lemma \ref{lem: Itai grid graphs} with $M=2$. These are the base cases for an inductive argument, which we explain now.

	Let us first consider the case $r=1$ and suppose that $s\ge 2$,  so that we have
	$$
	\Gamma\cong \Z\times \mathrm{Cube}(m_1,\ldots,m_s), 
	$$
	
	For $\mathrm{Cube}(m_{s-1},m_s)$ consider the Hamiltonian path $P$ that starts at $(1,1)$, traverses the edges of $I_{m_{s-1}}\times \{1\}$ until it reaches $(m_{s-1},1)$, then crosses to $(m_{s-1},2)$ and continues in a similar way traversing one copy of $I_{m_{s-1}}$ at the time, until it finally reaches either $(m_{s-1},m_s)$ or $(m_{s-1},1)$, after having visited all vertices of the graph $I_{m_{s-1}}\times I_{m_s}$.

	Define the function $$h:\mathrm{Cube}(m_{s-1},m_s)\to \{1,\ldots, m_{s-1}\cdot m_s\}$$ which assigns to each pair $(j,k)\in  \mathrm{Cube}(m_{s-1},m_s)$ its unique position $$h(j,k)\in \{1,\ldots, m_{s-1}\cdot m_s\}$$ in the path $P$. That is, if we write $P=P_0,P_1,\ldots,P_{\ell}$ as a sequence of vertices, then $$
	(j,k)=P_{h(j,k)}, \ \text{  for  }(j,k)\in  \mathrm{Cube}(m_{s-1},m_s).
	$$
	
	Note that as $P$ is a Hamiltonian path, the function $h$ is well defined and bijective.
	
	With the above, we can see that any subgraph $$R=I_{n}\times \mathrm{Cube}(m_1,\ldots,m_s)$$
	
	has a spanning subgraph isomorphic to another one of same shape in the graph $$\Z\times \mathrm{Cube}(m_1,\ldots,m_{s-2}, m_{s-1}m_s).$$ Indeed, it suffices to use the function $h$ to map any element $(i,j_1,j_2,\ldots,j_s)\in R$ into $$\big(i,j_1,\ldots,j_{s-2},h(j_{s-1},j_s)\big)\in \Z\times  \mathrm{Cube}(m_1,\ldots,m_{s-2}, m_{s-1}m_s).$$ This construction is illustrated in Figure \ref{fig: inductive step abelian groups} for $s=2$.

	\begin{figure*}[h!]
		\centering
		\begin{tikzpicture}[dot/.style={circle,inner sep=1pt,fill},scale=0.8, every node/.style={scale=0.8}]
\def\angl{50}
\def\angldepth{180}
\def\tot{16}

\node[dot] (A0) at (0,0) {};
\node[dot] (B0) at ([shift=({90:2.5 cm})]A0) {};
\node[dot] (C0) at ([shift=({\angl:2.5 cm})]B0) {};
\node[dot] (D0) at ([shift=({\angl:2.5 cm})]A0) {};
\node (m1) at ([shift=({\angldepth:\tot*0.66 cm})]A0) {$m_{1}$};
\node (m2) at ([shift=({\angl:3.1 cm})]A0) {$m_{2}$};
\node (Z) at ([shift=({90:5 cm})]A0) {$\mathbb{Z}$};
\draw[thick,black,->] (A0) -- (m1);
\draw[thick,black,->] (A0) -- (m2);
\draw[thick,black,->] (A0) -- (Z);
\draw[thick,blue!70!black] (A0) -- (B0) -- (C0) -- (D0) -- (A0);

\foreach \x in {1,2,...,\tot}{
	\node[dot] (A\x) at ([shift=({\angldepth:\x*0.6 cm})]A0) {};
	\node[dot] (B\x) at ([shift=({\angldepth:\x*0.6 cm})]B0) {};
	\node[dot] (C\x) at ([shift=({\angldepth:\x*0.6 cm})]C0) {};
	\draw[thick,blue!70!black] (A\x) -- (B\x) -- (C\x);
}
\foreach \x[evaluate= \x as \y using {int(\x-1)}] in {1,3,...,\tot}{
	\draw[thick,blue!70!black] (A\x) -- (A\y);
		\draw[thick,blue!70!black] (B\x) -- (B\y);
}
\foreach \x[evaluate= \x as \y using {int(\x-1)}] in {2,4,...,\tot}{
	\draw[thick,blue!70!black] (C\x) -- (C\y);
}
\foreach \x in {1,2,...,\tot}{
\node (w) at ([shift=({-90:0.84 cm})]C\x) {};	
\draw[thick,blue!70!black] (C\x) -- (w);
}

\foreach \x in {2,4,...,\tot}{
	\node (w) at ([shift=({\angl:1.085 cm})]A\x) {};	
	\draw[thick,blue!70!black] (A\x) -- (w);
}
\end{tikzpicture}
		\caption{Inductive step of the proof of Lemma \ref{lem: rectangles are quasi-Hamiltonian-connected}.}
		\label{fig: inductive step abelian groups}
	\end{figure*}
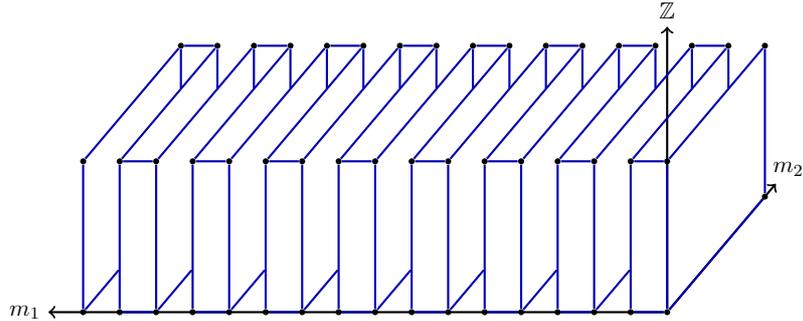
	
	Then, thanks to the induction hypothesis, this new subgraph has spanning paths of length at most $|R|+M$ between any pair of vertices, and so the same holds for $R$. This concludes the induction for the case $r=1$.
	
	For the remaining cases $r\ge 2$, a very similar induction proves the result. Indeed, now we repeat the argument using a Hamiltonian path between opposite corners of a grid subgraph, in order to find a spanning subgraph which is (isomorphic to) a similar subgraph on a graph with a lower number of finite factors $s$, or lower free rank $r$. Together with the already proved base case of $r=2$ and $s=0$, this finishes the inductive argument.
\end{proof}

The above implies that for any $r\ge 1$ and $m_1,\ldots, m_s\ge 1$ as in the hypotheses of Lemma \ref{lem: rectangles are quasi-Hamiltonian-connected}, the Cayley graph of 
$$
\Z^r\times \Z/m_1\Z\times\cdots \Z/m_s\Z
$$
with standard generators has the quasi-Hamiltonian property. In order to generalize this to hold for \textit{every} Cayley graph, we will use the following result about the structure of generating sets of abelian groups.

\begin{lemma}[{Nash-Williams, \cite[Lemma 3]{NashWilliams1959}}]\label{lem: Nash williams gensets of abelian groups} Let $B$ be an infinite abelian group with free abelian rank $\mathrm{rank}(B)=r\ge 1$, and $S$ any finite generating set. Then there is an enumeration $a_1,\ldots,a_r,b_1,\ldots,b_s$ of the elements of $S$ and there exist positive integers $m_1,\ldots,m_s$, such that each element $g\in B$ is uniquely expressible as
	$$
	g=\sum_{i=1}^{r}p_i a_i + \sum_{j=1}^s q_jb_j,
	$$ 
	where the $p_i,q_j\in \Z$ and for each $j=1,\ldots,s$, we have $0\le q_j<m_j$.
\end{lemma}

\begin{lemma}\label{lem: reducing any abelian generators to the canonical case}
	Under the assumptions of Lemma \ref{lem: Nash williams gensets of abelian groups}, the function
	\begin{align*}
	\varphi:\Z^r\times  \mathrm{Cube}(m_1,\ldots,m_s)&\to B \\
	(p_1,p_2,\ldots,p_r,q_1,q_2,\ldots,q_s)&\mapsto \sum_{i=1}^r p_i a_i + \sum_{j=1}^s q_j b_j
	\end{align*}
	is a bijective $1$-Lipschitz embedding of $\Z^r\times  \mathrm{Cube}(m_1,\ldots,m_s)$ onto the Cayley graph $\cay{B}{S}$. Moreover, $r$ can be chosen to be any integer between $\min\{2,\mathrm{rank}(B)\}$ and $\mathrm{rank}(B)$.
\end{lemma}
\begin{proof}
	
	The first sentence follows from Lemma \ref{lem: Nash williams gensets of abelian groups}, while the second one follows from an analogous inductive argument to the one used in the proof of Lemma \ref{lem: rectangles are quasi-Hamiltonian-connected}.
\end{proof}

\begin{proposition}\label{prop: any cayley graph of a fg abelian group has a quasi Hamiltonian sequence} With the exception of $\cay{\Z}{\{\pm 1\}}$, any Cayley graph of an infinite finitely generated abelian group is quasi-Hamiltonian.
\end{proposition}
\begin{proof}
	Let $(B,S)$ be an infinite finitely generated abelian group, different from $(\Z,\{\pm 1\})$. Then Lemma \ref{lem: reducing any abelian generators to the canonical case} tells us that for some $r,m_1,\ldots,m_s\ge 1$, there is a bijective $1$-Lipschitz embedding of $\Z^r\times \mathrm{Cube}(m_1,\ldots,m_s)$ onto $\cay{B}{S}$. In other words, tha former graph is a spanning subgraph of the latter one.
	
	Now using Lemma \ref{lem: rectangles are quasi-Hamiltonian-connected}, there exists a constant $M\ge 0$ such that any rectangle
	$$
	R_n=[-n,n]^r\times \mathrm{Cube}(m_1,\ldots,m_s),
	$$
	for $n\ge 1$, has spanning paths of length at most $|R_n|+M$ between any pair of vertices. Any ball of $\cay{B}{S}$ centered at the origin is contained in $R_n$ for $n$ sufficiently large, and the above implies that we can find paths from the origin to any other vertex with paths of length at most $|R_n|+M$. This shows that $\cay{B}{S}$ has the quasi-Hamiltonian property.
\end{proof}

\begin{proposition}\label{prop: any lamplighter over an abelian group has unbounded depth wrt standard gensets} For $(A,S_A)$ a group of unbounded depth, and $(B,S_B)$ any finitely generated abelian group, the lamplighter group $A\wr B$ has unbounded depth with respect to $\std$.
\end{proposition}
\begin{proof}
	The case where $(B,S_B)=(\mathbb{Z},{\pm 1})$ follows from the original example of Cleary and Taback \cite{ClearyTaback05}, or alternatively it can be seen as a particular case of Proposition \ref{prop: characterization of dead ends on lamplighters over free groups} for lamplighters over trees.
	
	In any other case, Corollary \ref{prop: any cayley graph of a fg abelian group has a quasi Hamiltonian sequence} shows that $(B,S_B)$ has the quasi-Hamiltonian property and hence Lemma \ref{lem: Hamiltonian sequence implies unbounded depth} implies that the group $(A\wr B,\std)$ has unbounded depth.
\end{proof}
A natural question is whether the claim of Lemma \ref{lem: reducing any abelian generators to the canonical case} holds for some non-abelian groups. That is, whether there is a bijective $1$-Lipschitz embedding of a graph of the form $\Z^r\times \mathrm{Cube}(m_1,\ldots,m_s)$ onto all of its Cayley graphs. As shown by Nash-Williams, any such graph must admit Hamiltonian double ray. With respect to this property, Thomassen has shown that it holds for any Cayley graph of the form $\cay{G}{S\cup S^2}$, where $G$ is a $1$-ended group and $S$ is a finite generating set \cite{Thomassen1978}. In \cite{Seward2014}, Seward studies \textit{translation-like actions} of free groups. In particular, they characterize groups with finitely many ends as those which admit a transitive translation-like action of $\Z$, and show that this is equivalent to having a Cayley graph that admits a Hamiltonian double ray. In the same paper, it is mentioned in Problem 4.8 that the existence of a Hamiltonian double ray in \textit{every} Cayley graph of an infinite finitely generated group with finitely many ends is an open question. With respect to our question of extending Lemma \ref{lem: reducing any abelian generators to the canonical case}, a first approach could be to study transitive translation-like actions of $\Z^r$, $r\ge 2$.
%Questions
\subsection{Remarks}
Unlike the original example of Cleary and Taback, the dead ends of $A\wr B$ we found using quasi-Hamiltonian Cayley graphs of the base group $B$ are of bounded retreat depth (Definition \ref{def: retreat depth}). Hence the following question remains open.
\begin{question}
	Suppose $(A,S_A)$ has unbounded retreat depth. Does every lamplighter group $A\wr B$ admit a standard generating set with unbounded retreat depth?
\end{question}

So far we have seen that this holds for $B$ a free group with a free generating set (Proposition \ref{prop: characterization of dead ends on lamplighters over free groups}), but to our knowledge there are no other known examples. An answer to this question would be of interest even in the case of particular base groups, as for example $B=\Z\times \Z/2\Z$ or $B=\Z^2$.

Another observation is that in the groups we have studied so far, the constant $M$ from the definition of the quasi-Hamiltonian property is at most $2$. We have not been able to find examples of a Cayley graph for which this constant is necessarily bigger. More generally, we ask the following.
\begin{question}
	Given an arbitrary $n\ge 1$, does there exist a group $B$ together with a generating set $S_B$ such that $\cay{B}{S_B}$ satisfies Definition \ref{def: qH property} with $M=n$ but not with $M=n-1$?
\end{question}
%%%%%%%%%%%%%%%%%%%%%%%%%%%%%%%%%%%%%%%%%%%%%%%%%%%%%%%%%%%%%%%%%%%%%%%%%% free products
\section{Lamplighters over free products of finite groups}\label{section: lamplighters over free products}
Our results in the previous sections concern lamplighter groups with unbounded depth with respect to standard generating sets. We begin this section by showing that this is not always the case, with an example of a lamplighter group over the free product $\Z/8\Z*\Z/2\Z$ that has uniformly bounded depth with respect to standard generators. 

Next, we characterize which standard generators of lamplighter groups over free products of finite groups have this property. For this, we define for a finite group $G$ with a generating set $S_G$ its \textit{Hamiltonian difference} $\mathscr{H}(G,S_G)$ (Definition \ref{def: new constant hamiltonicity finite groups}), which measures how much shorter minimal spanning cycles are than minimal spanning paths inside $\cay{G}{S_G}$. The main result of this section says that a lamplighter group over the free product $H*K$, where $(H,S_H)$ and $(K,S_K)$ are finite groups with their respective generating sets, has uniformly bounded depth if and only if $\mathscr{H}(H,S_H)+\mathscr{H}(K,S_K)\ge 1$ (Theorem \ref{thm: general dead ends on free products}).

In what follows we use the following observation. For $H,K$ finite groups, consider their free product $H*K$ with a generating set of the form $S_H\cup S_K$, where $S_H$ and $S_K$ are generating sets of $H$ and $K$, respectively. We can partition $\cay{H*K}{S_H\cup S_K}$ as follows. Any element $x\in H*K$ belongs to a copy of $\cay{H}{S_H}$ which, when removed, divides the Cayley graph $\cay{H*K}{S_H\cup S_K}$ into $|H|$ connected components that we number $P_0,\ldots, P_{|H|-1}$. To each of these sets $P_i$ we add the unique vertex of the original copy of $\cay{H}{S_K}$ to which it is connected in $\cay{H*K}{S_H\cup S_K}$, and we call them the \textit{petals} associated to this copy. A similar decomposition holds when considering $x$ as forming part of a copy of $\cay{K}{S_K}$, now obtaining $|K|$ petals associated to each copy of this finite subgraph. This decomposition by petals will allow us to compare the solutions to different instances of the TSP in $\cay{H*K}{S_H\cup S_K}$, and hence the word metric of the associated lamplighter group.

\subsection{A lamplighter group with uniformly bounded depth for standard generating sets}\label{subsection: lamplighter with uniformly bounded depth}

Fix the lamps group $\Z/2\Z=\langle a\mid a^2\rangle$, and consider the groups $H=\Z/8\Z=\langle b\mid b^8\rangle $ and $K=\Z/2\Z=\langle c\mid c^2\rangle$. We will study the wreath product $\Z/2\Z\wr (H* K)$ with standard generating set $\std=\{a, b,c\}^{\pm 1}$. In what follows we prove that the dead end depth of $\Z/2\Z\wr (H*K)$ is uniformly bounded with respect to $\std$.

The Cayley graph of $\Z/8\Z*\Z/2\Z$ with respect to the generating set $\{b,c\}$ is formed by octagons (the Cayley graph of $\cay{\Z/8\Z}{b}$) joined together by the generator $c$. This graph contains no odd cycles and hence it is bipartite, so that for any edge, one of its extremes is strictly closer to the identity than the other one. In particular, if we order cyclically the vertices of an octagon as $v_0,\ldots,v_7$ with $v_0$ being the closest to the identity element, then $v_4$ is the furthermost one.

As explained at the beginning of this section, once we have a free product we can consider the petal partition induced by each vertex. In this case, each octagon in $\cay{\Z/8\Z*\Z/2\Z}{ \{b,c\}}$ defines a partition of the Cayley graph into $8$ subsets, which we call petals and denote by $P_0,P_1,\ldots,P_7$, chosen in a cyclic order so that $P_0$ is the component containing the identity and $P_4$ is the furthermost one. This partition is illustrated in Figure \ref{fig: partition by petals Z/8Z*Z/2Z}.

\begin{figure*}[h!]
	\centering
	\begin{tikzpicture}[oct/.style={draw, anchor=west, regular polygon,thick, regular polygon sides=8, outer sep=0,blue!70!black},dot/.style={circle,inner sep=2pt,fill},my star/.append style={star, draw, star points=5,inner sep=0, star point ratio=2},scale=0.8, every node/.style={scale=0.8}]
%%%%%This draws an octagon
%\node (center) at (0,0) {};
%\foreach \x [count=\i] in {0,45,...,315} {
%\node[dot] (\i) at ([shift=({\x:1.5 cm})]center) {};
%}
%%%%%%%%%%%%%%%%%%%
%%This an octagon with the implemented function for nodes
%\begin{scope}[nodes={draw, anchor=west, regular polygon,thick, regular polygon sides=8, minimum width=3cm, outer sep=0,blue!70!black}]
%\node (A) {};
%\node[shift=(0:4cm) of A.corner 1] (B) {};
%\draw[green!50!black,thick] (A.center) -- (B);
%\end{scope}
%%%%%%%%%
\node[oct, minimum width=2.5cm] (Ocentral) at (0,0) {};

\foreach \x [count=\i] in {0,45,...,315} {
\node[oct, minimum width=1cm,rotate=\x] (O\i) at ([shift=({\x+22.5:3 cm})]Ocentral.center) {};
}
\draw[thick,-](Ocentral.corner 8) -- (O1.corner 4);
\foreach \x [count=\i,evaluate=\x as \y using { int(\x-1)}] in {2,3,...,8} {
		\draw[thick,-](Ocentral.corner \y) -- (O\x.corner 4);
}
\foreach \x [count=\i] in {1,2,3,...,8} {
	\foreach \a [count=\j,,evaluate=\a as \y using { int(\j+1))}] in {0,45,...,315} {
    \node[] (O\x O\j) at ([shift=({\a+22.5:1 cm})]O\x.center) {};
	}
}
%\foreach \x [count=\i] in {1,2,3,...,8} {
%	\foreach \a [count=\j,,evaluate=\a as \y using { int(\j+1))}] in {0,45,...,270} {
%		\draw[thick,-](O\x.corner \j) -- (O\x O\y);
%	}
%}
\foreach \a [count=\j] in {2,3,4,5,6,7,8,1}{
	\ifnum \j=4
	\else
	\draw[thick,-](O1.corner \j) -- (O1O\a);
\fi}
\foreach \a [count=\j] in {3,4,5,6,7,8,1,2}{
	\ifnum \j=4
	\else
	\draw[thick,-](O2.corner \j) -- (O2O\a);
	\fi}
\foreach \a [count=\j] in {4,5,6,7,8,1,2,3}{
	\ifnum \j=4
	\else
	\draw[thick,-](O3.corner \j) -- (O3O\a);
	\fi}
\foreach \a [count=\j] in {5,6,7,8,1,2,3,4}{
	\ifnum \j=4
	\else
	\draw[thick,-](O4.corner \j) -- (O4O\a);
	\fi}
\foreach \a [count=\j] in {6,7,8,1,2,3,4,5}{
	\ifnum \j=4
	\else
	\draw[thick,-](O5.corner \j) -- (O5O\a);
	\fi}
\foreach \a [count=\j] in {7,8,1,2,3,4,5,6}{
	\ifnum \j=4
	\else
	\draw[thick,-](O6.corner \j) -- (O6O\a);
	\fi}
\foreach \a [count=\j] in {8,1,2,3,4,5,6,7}{
	\ifnum \j=4
	\else
	\draw[thick,-](O7.corner \j) -- (O7O\a);
	\fi}
\foreach \a [count=\j] in {1,2,3,4,5,6,7,8}{
	\ifnum \j=4
	\else
	\draw[thick,-](O8.corner \j) -- (O8O\a);
	\fi}

%%%%Try to draw the petals
%\draw [dashed, line width=1pt,bend left=110] plot [smooth, tension=2] coordinates{(1.4,-4) (1.2,-0.7) (3.1,-3.2)} ;
%%%%%
\draw [dashed, line width=1pt,bend left=110] plot [smooth, tension=1] coordinates{
	([shift=({5:2 cm})]O1.center) 
	([shift=({-5:1.3 cm})]O1.center) 
	([shift=({-50:1.2 cm})]O1.center)
	([shift=({-130:1.3 cm})]O1.center) 
	([shift=({-156.5:2.3 cm})]O1.center) %%%
	([shift=({-166.5:2.3 cm})]O1.center)   %%%
	([shift=({-190:1.3 cm})]O1.center)
	([shift=({90:1.2 cm})]O1.center)
	([shift=({35:1.3 cm})]O1.center)
	([shift=({30:2 cm})]O1.center)
} ;
\node at ([shift=({18:2.3 cm})]O1.center) {$P_3$};
\begin{scope}[rotate=45]
\draw [dashed, line width=1pt,bend left=110] plot [smooth, tension=1] coordinates{
	([shift=({5:2 cm})]O2.center) 
	([shift=({-5:1.3 cm})]O2.center) 
	([shift=({-50:1.2 cm})]O2.center)
	([shift=({-130:1.3 cm})]O2.center) 
	([shift=({-156.5:2.3 cm})]O2.center) %%%
	([shift=({-166.5:2.3 cm})]O2.center)   %%%
	([shift=({-190:1.3 cm})]O2.center)
	([shift=({90:1.2 cm})]O2.center)
	([shift=({35:1.3 cm})]O2.center)
	([shift=({30:2 cm})]O2.center)
} ;
\node at ([shift=({18:2.3 cm})]O2.center) {$P_4$};
\end{scope}
\begin{scope}[rotate=90]
\draw [dashed, line width=1pt,bend left=110] plot [smooth, tension=1] coordinates{
	([shift=({5:2 cm})]O3.center) 
	([shift=({-5:1.3 cm})]O3.center) 
	([shift=({-50:1.2 cm})]O3.center)
	([shift=({-130:1.3 cm})]O3.center) 
	([shift=({-156.5:2.3 cm})]O3.center) %%%
	([shift=({-166.5:2.3 cm})]O3.center)   %%%
	([shift=({-190:1.3 cm})]O3.center)
	([shift=({90:1.2 cm})]O3.center)
	([shift=({35:1.3 cm})]O3.center)
	([shift=({30:2 cm})]O3.center)
} ;
\node at ([shift=({18:2.3 cm})]O3.center) {$P_5$};
\end{scope}
\begin{scope}[rotate=135]
\draw [dashed, line width=1pt,bend left=110] plot [smooth, tension=1] coordinates{
	([shift=({5:2 cm})]O4.center) 
	([shift=({-5:1.3 cm})]O4.center) 
	([shift=({-50:1.2 cm})]O4.center)
	([shift=({-130:1.3 cm})]O4.center) 
	([shift=({-156.5:2.3 cm})]O4.center) %%%
	([shift=({-166.5:2.3 cm})]O4.center)   %%%
	([shift=({-190:1.3 cm})]O4.center)
	([shift=({90:1.2 cm})]O4.center)
	([shift=({35:1.3 cm})]O4.center)
	([shift=({30:2 cm})]O4.center)
} ;
\node at ([shift=({18:2.3 cm})]O4.center) {$P_6$};
\end{scope}
\begin{scope}[rotate=180]
\draw [dashed, line width=1pt,bend left=110] plot [smooth, tension=1] coordinates{
	([shift=({5:2 cm})]O5.center) 
	([shift=({-5:1.3 cm})]O5.center) 
	([shift=({-50:1.2 cm})]O5.center)
	([shift=({-130:1.3 cm})]O5.center) 
	([shift=({-156.5:2.3 cm})]O5.center) %%%
	([shift=({-166.5:2.3 cm})]O5.center)   %%%
	([shift=({-190:1.3 cm})]O5.center)
	([shift=({90:1.2 cm})]O5.center)
	([shift=({35:1.3 cm})]O5.center)
	([shift=({30:2 cm})]O5.center)
} ;
\node at ([shift=({18:2.3 cm})]O5.center) {$P_7$};
\end{scope}
\begin{scope}[rotate=225]
\draw [dashed, line width=1pt,bend left=110] plot [smooth, tension=1] coordinates{
	([shift=({5:2 cm})]O6.center) 
	([shift=({-5:1.3 cm})]O6.center) 
	([shift=({-50:1.2 cm})]O6.center)
	([shift=({-130:1.3 cm})]O6.center) 
	([shift=({-156.5:2.3 cm})]O6.center) %%%
	([shift=({-166.5:2.3 cm})]O6.center)   %%%
	([shift=({-190:1.3 cm})]O6.center)
	([shift=({90:1.2 cm})]O6.center)
	([shift=({35:1.3 cm})]O6.center)
	([shift=({30:2 cm})]O6.center)
} ;
\node at ([shift=({18:2.3 cm})]O6.center) {$P_0$};
\end{scope}
\begin{scope}[rotate=270]
\draw [dashed, line width=1pt,bend left=110] plot [smooth, tension=1] coordinates{
	([shift=({5:2 cm})]O7.center) 
	([shift=({-5:1.3 cm})]O7.center) 
	([shift=({-50:1.2 cm})]O7.center)
	([shift=({-130:1.3 cm})]O7.center) 
	([shift=({-156.5:2.3 cm})]O7.center) %%%
	([shift=({-166.5:2.3 cm})]O7.center)   %%%
	([shift=({-190:1.3 cm})]O7.center)
	([shift=({90:1.2 cm})]O7.center)
	([shift=({35:1.3 cm})]O7.center)
	([shift=({30:2 cm})]O7.center)
} ;
\node at ([shift=({18:2.3 cm})]O7.center) {$P_1$};
\end{scope}
\begin{scope}[rotate=315]
\draw [dashed, line width=1pt,bend left=110] plot [smooth, tension=1] coordinates{
	([shift=({5:2 cm})]O8.center) 
	([shift=({-5:1.3 cm})]O8.center) 
	([shift=({-50:1.2 cm})]O8.center)
	([shift=({-130:1.3 cm})]O8.center) 
	([shift=({-156.5:2.3 cm})]O8.center) %%%
	([shift=({-166.5:2.3 cm})]O8.center)   %%%
	([shift=({-190:1.3 cm})]O8.center)
	([shift=({90:1.2 cm})]O8.center)
	([shift=({35:1.3 cm})]O8.center)
	([shift=({30:2 cm})]O8.center)
} ;
\node at ([shift=({18:2.3 cm})]O8.center) {$P_2$};
\end{scope}

\end{tikzpicture}
	\caption{Each octagon defines a ``partition by petals'' of $\cay{\Z/8\Z*\Z/2\Z}{ \{b,c\}}$.}
	\label{fig: partition by petals Z/8Z*Z/2Z}
\end{figure*}

Now we are ready to prove that the depth of any element of $\Z/2\Z\wr (H*K)$ is uniformly bounded. Indeed, consider an arbitrary element $g\in \Z/2\Z\wr (H* K)$, and write $g=(f,p)$ where $$f\in \bigoplus_{\Z/8\Z*\Z/2\Z}\Z/2\Z \text{ and } p\in \Z/8\Z*\Z/2\Z.$$
As we are trying to find a bound on the depth of $g$, we lose no generality if we suppose that $\|g\|_{\std}>10$; as $\Z/2\Z\wr (\Z/8\Z*\Z/2\Z)$ is an infinite group, the depth of any element is finite and hence the depth of elements on $B_{\std}(e,10)$ is uniformly bounded.

According to our notation, $p$ corresponds to the position of the lamplighter in $\cay{B}{S_B}$ for the element $g$, where $B=\Z/8\Z*\Z/2\Z$ and $S_B=\{b,c\}$. We divide the rest of the proof on two cases.

\begin{itemize}
	
	\item[\textbf{Case 1:}]Suppose there exists $p'\in B_{S_B}(p,10)$ with $f(p')=e_{\Z/2\Z}$ . In other words, there is an unlit lamp at distance at most $10$ from $p$. In such a case we multiply by a word of length at most $21$ which moves the lamplighter position from $p$ to $p'$, lights the corresponding lamp at $p'$, and returns to $p$. This will forcefully result in an element of word length at least $1$ more than $\|g\|_{\std}$. Hence, the depth of $g$ is bounded above by $21$.
	
	\item[\textbf{Case 2:}] $f(p')=a$ for all $p'\in B_{S_B}(p,10)$.	Now we suppose that all lamps within a $10$-neighborhood of $p$ are lit. We look at the octagon to which $p$ belongs, and number its vertices $v_0,v_1,\ldots,v_7$ according to order induced by the cyclic generator $b$. Similarly, we consider the partition by petals $P_0,P_1,\ldots,P_7$ determined by this octagon. In the case we are considering, there are lamps lit at each petal and so
	$$
	\|g\|_{\std}=\left|\supp{f}\right|+\ell_0+\ell_1+\cdots+\ell_7 +7 + \min\left\{ d_B(p,v_1),d_B(p,v_7)\right\},
	$$
	where 
	$$
	\ell_0=\TS{e_B}{v_0}{f|_{P_0}}, \ \ell_i=\TS{v_i}{v_i}{f|_{P_i}} \text{  for  } i=1,\ldots,7,
	$$
	are the lengths of minimal paths traversing the support of $f$ at each petal and finishing in the respective vertices $v_i$. This is is illustrated in Figure \ref{fig: octagons petals TSP}. Note that we have $\min\left\{ d_B(p,v_1),d_B(p,v_7)\right\}\le 3$ so that
	$$\|g\|_{\std}\le\left|\supp{f}\right|+\ell_0+\ell_1+\cdots+\ell_7 +10.
	$$
	
	\begin{figure*}[h!]
		\centering
		\input{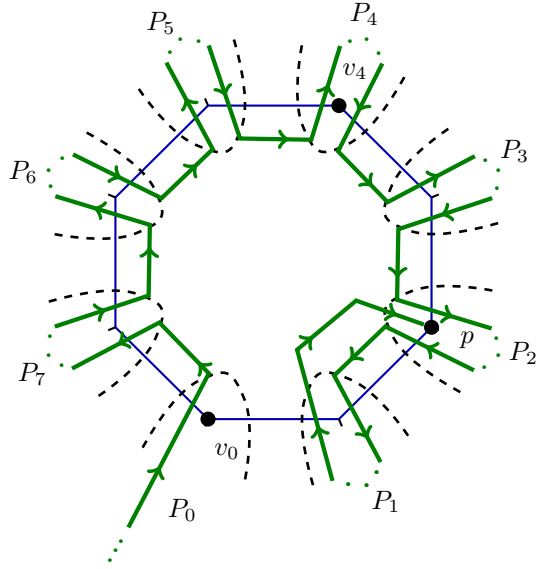}
		\caption{An optimal path for $g$ must visit all petals and return to $p$.}
		\label{fig: octagons petals TSP}
	\end{figure*}
	
	Now we will explain how to increase the word length of $g$ by multiplying by a bounded number of generators. Look in more detail at the octagon $p$ is together with its petal $P_4$. This is defined by a new octagon whose vertices we name $v'_0,v'_1,\ldots,v'_7$ and petals $P'_0,P'_1,\ldots,P'_7$ similarly as we did before. By using at most $9$ generators of $B$, we can move the position of the lamplighter from $p$ to the vertex $v_4$ of the octagon defining $P_4$. Indeed, we need to traverse at most $4$ edges of the octagon of $p$, then the generator $c$ to pass to the next octagon, and then $4$ more edges. We call this new element $g'$. This is depicted in Figure \ref{fig: subpetal P4 in detail}.
	\begin{figure*}[h!]
		\centering
		\input{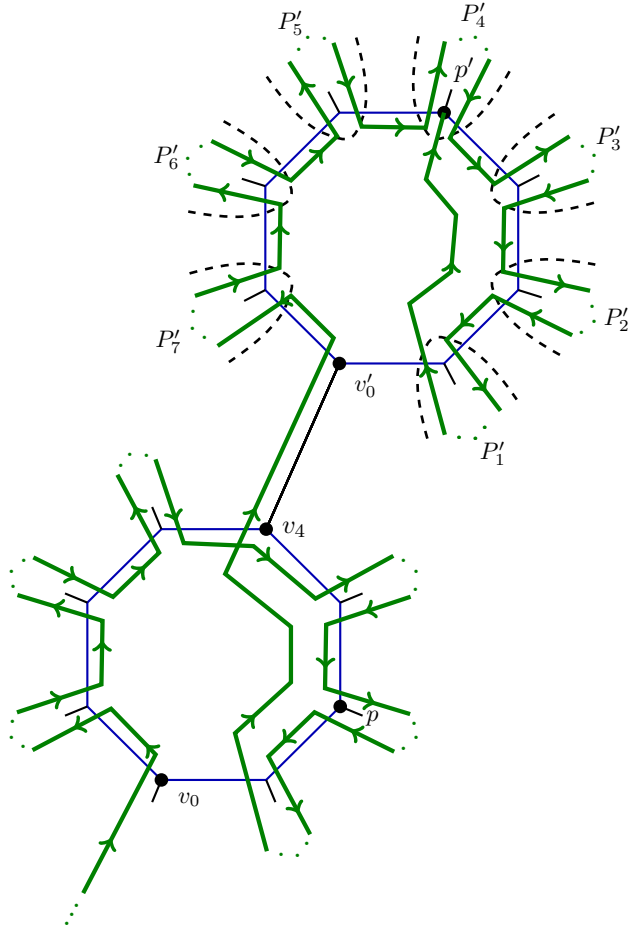}
		\caption{The new position $p'$ of the lamplighter for the element $g'$, with $d_{\std}(g,g')\le 9$.}
		\label{fig: subpetal P4 in detail}
	\end{figure*}
	
	Note that defining analogously the lengths of minimal paths passing through the petals $P'_i$ of the new octagon, 
	$$\ell_i'=\TS{v'_i}{v'_i}{f|_{P'_i}}, \ i=1,2,\ldots,7,
	$$
	we have $\ell_4=\ell'_1+\ell'_2+\cdots+\ell'_7+10$. Indeed, the path of length $\ell_4$ covering the elements of $P_4$ must cross to $v_0'$, pass through each petal $P'_i$, $i=1,\ldots,7$,  while traversing the octagon to finally return to $v_0'$ and afterwards to $v_4$. On the other hand, we see that
	\begin{align*}
	\|g'\|_{\std}=&\left|\supp{f}\right|+\ell_0+\ell_1+\ell_2+\ell_3+\ell_5+\ell_6+\ell_7+10+1+\\
	&+ \ell'_1+\ell'_2+\ell'_3+\ell'_4+\ell'_5+\ell'_6+\ell'_7+10\\
	=&\left|\supp{f}\right|+\ell_0+\ell_1+\cdots+\ell_7+11\\
	\ge&\|g\|_{\std}+1.
	\end{align*}
	This proves that $g$ has depth at most $9$, and hence finishes the proof.
	
\end{itemize}

The existence of generating sets for lamplighter groups with uniformly bounded depth has been proven by Warshall \cite{warshall2008strongly}. However such generating sets are not standard ones, and hence the example explained in this Subsection (and more generally Corollary \ref{cor: free products of finite abelian groups characterization of lamplighter depth} below) seem to provide the first example of lamplighter groups with uniformly bounded depth with respect to standard generators.
\begin{corollary}\label{cor: there exist a lamplighter group with uniformly bounded depth with respect to std gensets}
	Let $(A,S_A)$ be a lamps group of unbounded depth. Then there exist finite groups $(H,S_H)$, $(K,S_K)$ for which $(A\wr (H*K),\std)$ has uniformly bounded depth.
\end{corollary}
\subsection{A general characterization}

Recall that given any two vertices $v,w$ of a finite connected graph $\Gamma$, we denote by $\TS{v}{w}{\Gamma}$ the minimal length of a path in $\Gamma$ which starts at $v$, ends at $w$ and visits all vertices of $\Gamma$.
\begin{definition}\label{def: new constant hamiltonicity finite groups} Let $G$ be a finite group and $S_G$ a finite set. We define
	$$
	\mathscr{H}(G,S_G)=	\max_{\substack{g\in G\backslash\{e_G\}}} \Big\{ \TS{e_G}{g}{G} \Big\} - \TS{e_G}{e_G}{G},
	$$
	where as usual the TSP is considered in the Cayley graph $\cay{G}{S_G}$.
	
\end{definition}

Note that for any generator $g\in S_G$, we have 
$$
\TS{e_G}{e_G}{G}\le \TS{e_G}{g}{G}+1,
$$
so that it always holds that $$\mathscr{H}(G,S_G)\ge -1.$$ Moreover, this lower bound is attained when $\cay{G}{S_G}$ is Hamiltonian-connected. We now state the main result of this section, which characterizes standard generating sets of unbounded depth for lamplighter over a free product of finite groups, in terms of their Hamiltonian differences.

\begin{thm}\label{thm: general dead ends on free products} Let $(H,S_H)$, $(K,S_K)$ be finite groups together with finite symmetric generating sets. Consider the free product $H*K$ with generating set $S_H\cup S_K$, and the lamplighter group $A\wr (H*K)$ with standard generating set $\std$, where $(A,S_A)$ is the lamps group of unbounded depth. Then $A\wr (H*K)$ has uniformly bounded depth with respect to $\std$ if and only if
	
	\begin{equation}\label{eq:general characterization dead ends on free products}
	\mathscr{H}(H,S_H)+\mathscr{H}(K,S_K)\ge 1.
	\end{equation} 
	
\end{thm}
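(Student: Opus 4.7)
My starting point is a decomposition of the cost $\TS{e_{H*K}}{p}{\supp{f}}$ along the petal structure recalled at the beginning of the section. An optimal walk passes through a finite tree $T$ of copies of $\cay{H}{S_H}$ and $\cay{K}{S_K}$; inside each copy $C \in T$ it visits the gluing vertices whose petals contain elements of $\supp{f}$, plus all lamps lit inside $C$. Copies $C$ on the unique path of copies from the one containing $e_{H*K}$ to the one containing $p$ contribute a ``through'' term $\TS{v_{\mathrm{in}}}{v_{\mathrm{out}}}{F_C}$ for certain gluing vertices $v_{\mathrm{in}}\neq v_{\mathrm{out}}$; copies off this path contribute a ``return'' term $\TS{v}{v}{F_C}$. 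When $F_C$ equals all of $C$, the difference between a ``through'' and a ``return'' contribution is by definition at most $\mathscr{H}(H,S_H)$ or $\mathscr{H}(K,S_K)$, depending on the type of $C$.

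For the implication $(\Leftarrow)$, suppose $\mathscr{H}(H,S_H)+\mathscr{H}(K,S_K)\ge 1$. Fix a constant $M$ depending on $|H|,|K|,|S_A|$ and take $g=(f,p)$ with $\|g\|_{\std}$ above a threshold (elements of small norm have bounded depth automatically). I seek a word $w \in \std$ of length at most a universal constant, with $\|gw\|_{\std} > \|g\|_{\std}$, so that Lemma~\ref{lem: Fuzz Lemma} yields uniformly bounded depth. If $B_{S_B}(p,M)$ contains an unlit lamp, a word of length $\le 2M+1$ travels there, lights it, and returns to $p$, increasing $\sum_v\|f(v)\|_{S_A}$ by at least $1$ without affecting the TSP cost. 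Otherwise every lamp within distance $M$ of $p$ is already lit, and I mimic the argument of Subsection~\ref{subsection: lamplighter with uniformly bounded depth}: traverse the current copy of $p$ to a vertex realizing the $\mathscr{H}(H,S_H)$-gap in its role as exit vertex, apply a single $S_K$-generator to enter the adjacent copy of opposite type, and traverse that copy to a vertex realizing the $\mathscr{H}(K,S_K)$-gap. The decomposition shows the TSP cost of the new element exceeds that of $g$ by $\mathscr{H}(H,S_H)+\mathscr{H}(K,S_K)\ge 1$, up to an additive constant absorbed by $M$.

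For the implication $(\Rightarrow)$, I prove the contrapositive: if $\mathscr{H}(H,S_H)+\mathscr{H}(K,S_K)\le 0$, then for each $n\ge 1$ there is a dead end of depth at least $n$. Pick a dead end $a \in A$ of $S_A$-depth at least $n$, let $F\subseteq H*K$ be the union of all copies at tree-distance at most $3n$ from the copy of $e_{H*K}$ in the tree of copies, and choose $p$ in a copy $C_p$ at tree-distance exactly $n$, positioned inside $C_p$ so as to maximize $\TS{v_{\mathrm{in}}}{\cdot}{C_p}$ (where $v_{\mathrm{in}}$ is the gluing vertex of $C_p$ with its parent). Set $f\equiv a$ on $F$ and $g=(f,p)$. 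For any $w \in \std$ with $\|w\|_{\std} \le n$, write $gw=(f',p')$. The new position $p'$ stays in $F$, so each $f'(v)$ equals $a\cdot \alpha_v$ for some $\alpha_v\in A$ of $S_A$-length at most $n$; by the depth of $a$, $\|f'(v)\|_{S_A} \le \|a\|_{S_A}$, and hence $\sum_v \|f'(v)\|_{S_A} \le \sum_v \|f(v)\|_{S_A}$. It remains to compare TSP costs. The change $\TS{e}{p'}{\supp{f'}} - \TS{e}{p}{\supp{f}}$ decomposes along the tree-path from $C_p$ to $C_{p'}$: this path emanates from $C_p$ through an edge of the opposite type, so the per-copy contributions alternate and pair into units bounded by $\mathscr{H}(H,S_H)+\mathscr{H}(K,S_K) \le 0$; the extremality of $p$ cancels the remaining boundary term. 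Therefore $\|gw\|_{\std} \le \|g\|_{\std}$, establishing that $g$ is a dead end of depth at least $n$.

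The hard part is the alternating telescoping in the contrapositive: a naive estimate overcounts the contribution from the ``first step'' out of $C_p$, which could be of the wrong type to pair with its successor. The choice of $p$ maximizing $\TS{v_{\mathrm{in}}}{\cdot}{C_p}$ ensures that replacing a ``through'' $C_p$-term with a ``return'' $C_p$-term shaves off exactly $\mathscr{H}(H,S_H)$, offsetting the first new ``through'' term introduced in the child copy of opposite type. Analogous care is required when a copy previously traversed as ``return'' becomes ``through'' under the perturbation; matching these local changes to global pairs of $\mathscr{H}(H,S_H)+\mathscr{H}(K,S_K)$ is the combinatorial heart of the argument.
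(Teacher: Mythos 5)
Your overall framework (the tree-of-copies/petal decomposition of $\TS{e}{\cdot}{\supp{f}}$, distinguishing ``through'' and ``return'' traversals of each copy) is the same as the paper's, but both directions have concrete gaps in the execution. In the direction $(\Leftarrow)$ the move you describe does not achieve the claimed gain. The copy containing $p$ is already traversed in $g$ as an \emph{open} walk ending at $p$, so relocating the endpoint to the maximizer $v$ of that copy gains only $L_H-\TS{e_H}{v(p)}{H}\ge 0$, not $\mathscr{H}(H,S_H)$; the $\mathscr{H}$-gains come exclusively from converting the \emph{return} term $\TS{e}{e}{\cdot}$ of a copy lying beyond the current endpoint into a through term ending at a maximizer. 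Since your word stops after traversing the adjacent $K$-copy, its total gain is only $\mathscr{H}(K,S_K)$ plus a nonnegative term, which can fail to be positive: for $H=\Z/8\Z$, $K=\Z/2\Z$ (the paper's own motivating example, with $\mathscr{H}(H,S_H)=2$ and $\mathscr{H}(K,S_K)=-1$), if $p$ already sits at the vertex of its octagon maximizing $\TS{e_H}{\cdot}{H}$, your word strictly \emph{decreases} the length. The correct move has three legs, as in the paper's word $v(p)^{-1}vwv$: to the $H$-maximizer, across the whole $K$-copy to the $K$-maximizer $w$, and then into the fresh $H$-copy at $vw$ back to $v$; only the last two legs contribute $\mathscr{H}(K,S_K)$ and $\mathscr{H}(H,S_H)$, and the reduction to the case $\mathscr{H}(H,S_H)\ge 1$ is what keeps the first leg from being a loss when $p$ is the entry vertex of its copy. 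Finally, an increase ``up to an additive constant absorbed by $M$'' proves nothing here: uniformly bounded depth needs a genuine strict increase after boundedly many generators, and the Fuzz Lemma is irrelevant to this direction (it is the tool for the other one).

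In the contrapositive your candidate is in the right spirit (lamps saturated on a large region, lamplighter at a vertex extremal for the open TSP of its copy), but the ``alternating pairing'' is not a proof and is false as stated: the per-syllable increments do not pair into blocks bounded by $\mathscr{H}(H,S_H)+\mathscr{H}(K,S_K)$, because a word ending with an unpaired syllable of the factor with the larger Hamiltonian difference leaves a positive term uncompensated. Under the hypothesis the only delicate case is (up to swapping factors) $\mathscr{H}(H,S_H)=+1$, $\mathscr{H}(K,S_K)=-1$; if your copy $C_p$ is of type $K$, a single $H$-generator moves the endpoint into a child $H$-copy and can raise the length by $1$, so your $g$ is not even a dead end. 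You must fix the type of $C_p$ according to which factor has the larger $\mathscr{H}$ (equivalently, run the paper's two cases), and then prove, by induction on the alternating normal form of the multiplied word, the asymmetric per-syllable estimates: a $K$-syllable changes the length by $\TS{e_K}{k}{K}-\TS{e_K}{e_K}{K}\le -1$, an $H$-syllable by at most $+1$ (and by $\le 0$ when it stays inside $C_p$, by extremality of $p$). That induction is the actual content hiding behind ``the combinatorial heart''; your placement of $p$ at tree-distance $n$ and the radius-$3n$ lit region are harmless but unnecessary (the paper takes the position inside the copy at the identity and lights a ball of radius $n$).
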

\begin{proof}
	Let us suppose first that Equation \eqref{eq:general characterization dead ends on free products} holds, and let us prove that $A\wr (H*K)$ has uniformly bounded depth. Begin by choosing elements $v\in H\backslash\{e_H\}$, $w\in K\backslash\{e_K\}$ with
	$$
	\TS{e_H}{v}{H}=L_H\coloneqq  \max_{v'\in H\backslash\{e_H\}}\left\{\TS{e_H}{v'}{H}\right\},$$ and $$\TS{e_K}{w}{K}=L_K\coloneqq  \max_{w'\in K\backslash\{e_K\}}\left\{\TS{e_K}{w'}{K}\right\}.
	$$
	Our hypothesis is that 
	\begin{equation}\label{eq: inside the proof general characterization dead ends on free products}
	L_H+L_K\ge \TS{e_H}{e_H}{H}+\TS{e_K}{e_K}{K}+1.
	\end{equation}
	
	Note that Equation \eqref{eq: inside the proof general characterization dead ends on free products} implies that either $$L_H\ge \TS{e_H}{e_H}{H}+1, $$ or $$L_K\ge \TS{e_K}{e_K}{K}+1.$$ Without loss of generality, we suppose this is the case for $L_H$.

	To simplify notation, denote $G\coloneqq H*K$ and $S_G=S_H\cup S_K$. Let $g=(f,p)\in A\wr G$ be any element. If there exists $p'\in B_{S_G}(p,2|K|+2|H|)$ with $f(p')=e_A$, then we can increase the word length of $g$ by moving the lamplighter to $p'$, changing the state of this lamp to a non-trivial element, and then return to $p$. This uses at most $4(|K|+|H|)+1$ generators and since $H$ and $K$ are finite this is a constant.
	
	Now assume that for all $p'\in B_{S_G}(p,2|K|+2|H|)$ we have $f(p')\neq e_A$, so that any path in $\cay{G}{S_G}$ evaluating to $g$ must visit all the elements in said ball at least once.
	
	Consider the copy of $\cay{H}{S_H}$ to which $p$ belongs, and the associated petals $P_0,\ldots,P_{|H|-1}$, where $P_0$ is the petal containing the identity $e_G$. In the same way, we number the vertices of this copy of $H$ as $v_0,\ldots,v_{|H|-1}$ according to their associated petal. Then the word length of $g$ can be expressed as
	$$
	\|g\|_{\std}=\sum_{z\in \supp{f}}\|f(z)\|_{S_A}+\ell_0+\ell_1+\cdots+\ell_{|H|-1}+\TS{e_H}{v(p)}{H},
	$$
	where $\ell_0=\TS{e_K}{v_0}{f|_{P_0}}$, $\ell_i=\TS{v_i}{v_i}{f|_{P_i}}$, $i=1,\ldots,|K|-1$, and $v(p)$ is the vertex of $\cay{H}{S_H}$ that coincides with the position of $p$ inside this copy of $H$. In particular, we have
	$$
	\|g\|_{\std}\le\sum_{z\in \supp{f}}\|f(z)\|_{S_A}+\ell_0+\ell_1+\cdots+\ell_{|H|-1}+ L_H.
	$$
	Now consider the element $g'=gv(p)^{-1}vwv$. That is, we are moving the position of the lamplighter from $p$ to the vertex $v$ (which maximizes the value of $\TS{e_H}{v}{H}$), then to the vertex $w$ (which maximizes the value of $\TS{e_K}{w}{K}$) of the corresponding copy of $\cay{K}{S_K}$, and finally again to the vertex $v$ of the new copy of $\cay{H}{S_H}$. 
	
	As before, the vertices of $\cay{K}{S_K}$ define a partition by petals $P_0',P_1',\ldots,P'_{|K|-1}$ of $\cay{K}{S_K}$, where the identity element $e_K$ belongs to $P_0'$. Again, we number the vertices of this copy of $\cay{K}{S_K}$ by $w'_0,\ldots,w'_{|K|-1}$ according to the petal they define. Defining $$\ell_j'=\TS{w'_j}{w'_j}{f|_{P'_j}}, \ j=1,\ldots,|K|-1,$$
	it follows that if $v=v_i$, for some $i\in \{1,\ldots,|H|-1\}$, then
	$$
	\ell_i=\TS{e_K}{e_K}{K}+\ell'_1+\cdots+\ell'_{|K|-1}.
	$$
	
	Similarly, the new copy of $\cay{H}{S_H}$ gives a new partition of $\cay{K}{S_K}$ into petals $P_0'',P_1'',\ldots,P''_{|H|-1}$ with $e_K\in P_0''$ and the vertices of $H$ numbered as $v_m''$ according to the petal they belong to. Say that $w=w'_r$ for some $r\in \{1,\ldots,|K|-1\}$. Then
	
	$$
	\ell'_r= \TS{e_H}{e_H}{H}+ \ell''_1+\cdots+\ell''_{|H|-1},
	$$
	where we defined $\ell''_h=\TS{v''_h}{v''_h}{f|_{P''_h}}$, $h=1,\ldots, |H|-1$.

	We can express the word length of $g'$ in terms of all these values. Indeed, we have
	
	\begin{align*}
	\|g'\|_{\std}=&\sum_{z\in \supp{f}}\|f(z)\|_{S_A}+ \sum_{j\neq i}\ell_j +\TS{e_H}{v}{H}\ + \\ &+\TS{e_K}{w}{K}+\sum_{h\neq r}\ell'_h+\TS{e_H}{v}{H}+\ell''_1+\cdots+\ell''_{|H|-1}.
	\end{align*}

	Combining the last three equations together with Equation \eqref{eq: inside the proof general characterization dead ends on free products}, we obtain
	\begin{align*}
	\|g'\|_{\std}=&\sum_{z\in \supp{f}}\|f(z)\|_{S_A}+ \sum_{j\neq i}\ell_j +\TS{e_H}{v}{H}\ + \\ &+\TS{e_K}{w}{K}+\sum_{h\neq r}\ell'_h+\TS{e_H}{v}{H}+\ell''_1+\cdots+\ell''_{|H|-1}\\
	=& \sum_{z\in \supp{f}}\|f(z)\|_{S_A}+ \sum_{j\neq i}\ell_j +L_H +L_K+\sum_{h\neq r}\ell'_h+L_H+\ell''_1+\cdots+\ell''_{|H|-1}\\
	\ge & \sum_{z\in \supp{f}}\|f(z)\|_{S_A}+ \sum_{j\neq i}\ell_j +\TS{e_H}{e_H}{H}+\TS{e_K}{e_K}{K}+1+\ \\ &+\sum_{h\neq r}\ell'_h+L_H+\ell''_1+\cdots+\ell''_{|H|-1}\\
	= & \sum_{z\in \supp{f}}\|f(z)\|_{S_A}+ \sum_{j\neq i}\ell_j +\TS{e_K}{e_K}{K}+1+\sum_{h\neq r}\ell'_h+L_H+\ell'_r\\
	= & \sum_{z\in \supp{f}}\|f(z)\|_{S_A}+ \sum_{j\neq i}\ell_j +\TS{e_K}{e_K}{K}+1+\sum_{h}\ell'_h+L_H\\
	= & \sum_{z\in \supp{f}}\|f(z)\|_{S_A}+ \sum_{j\neq i}\ell_j+\ell_i+1+L_H\\
	= & \sum_{z\in \supp{f}}\|f(z)\|_{S_A}+ \sum_{j}\ell_j+L_H+1\\
	\ge & \|g\|_{\std}+1.
	\end{align*}
	
	In the process of obtaining $g'$ we multiplied by at most $2|H|+|K|$ generators, and so we conclude that the depth of any element is bounded by a uniform constant. This proves the first direction of the proposition.

	Now let us prove that if Equation \eqref{eq:general characterization dead ends on free products} does not hold then the associated lamplighter group has unbounded depth. From the hypothesis we deduce the inequalities
	$$
	\TS{e_H}{e_H}{H}-1\le \max_{v\in H\backslash\{e_H\}}\left\{\TS{e_H}{v}{H}\right\}\le \TS{e_H}{e_H}{H}+1,
	$$
	and 
	$$
	\TS{e_K}{e_K}{K}-1\le \max_{w\in K\backslash\{e_K\}}\left\{\TS{e_K}{w}{K}\right\}\le \TS{e_K}{e_K}{K}+1.
	$$
	
	With this, the possible values for these two maximums are illustrated in Table \ref{table: possible values TS free products}. Here we denote $L_H= \max_{v\in H\backslash\{e_H\}}\left\{\TS{e_H}{v}{H}\right\}$ and 
	
	$L_K= \max_{w\in K\backslash\{e_K\}}\left\{\TS{e_K}{w}{K}\right\}$.
	\begin{table}[h!]
		\centering
		\begin{tabular}{|l|l|l|l|}
			\hline
			\diagbox[width=3.5cm]{$L_K$}{$L_H$}& $\TS{e_H}{e_H}{H}-1$ & $\TS{e_H}{e_H}{H}$ & $\TS{e_H}{e_H}{H}+1$ \\ \hline
			$\TS{e_K}{e_K}{K}-1$ &           Case $2$           &     Case $2$               &           Case $1$           \\ \hline
			$\TS{e_K}{e_K}{K}$   &           Case $2$           &      Case $2$              &            Impossible          \\ \hline
			$\TS{e_K}{e_K}{K}+1$ &          Case $1$            &          Impossible          &    Impossible                  \\ \hline
		\end{tabular}
		\caption{Possible values for the solutions of the TSP inside each finite graph.}
		\label{table: possible values TS free products}
	\end{table}
	Each possible combination of values of $L_H$ and $L_K$ will be covered in two separate cases, as shown in the table. Recall that for convenience, we defined $G=H*K$ and $S_G=S_H\cup S_K.$
	\begin{itemize}
		\item[\textbf{Case 1.}] Suppose that $L_H=\TS{e_H}{e_H}{H}+1$, so that we must have $$L_K= \TS{e_K}{e_K}{K}-1.$$ Choose $v\in H$ such that $\TS{e_H}{v}{H}=L_H$.
		
		Consider $a\in (A,S_A)$ of depth at least $n$, and as usual define the element $g=(f,v)$, where $f(x)=a$ if $\|x\|_K\le n$ and $f(x)=1_A$ otherwise. We will prove that $g$ has depth at least $n-1$. Similar to how we approached the case of free groups, changing the lamp states cannot increase word length so the proof will follow from the following claim.

		We claim that for any $x\in G$ with $1\le \|x\|_{S_G}\le n-1$, we have $\|gx\|_{\std}\le \|g\|_{\std}-1$ if $x$ finishes with an element of $K\backslash\{e_K\}$ and $\|gx\|_{\std}\le \|g\|_{\std}$ if $x$ finishes with an element of $H\backslash\{e_H\}$.

		Indeed, let us do an inductive proof.
		If $x\in H$, then 
		\begin{equation*}
		\|gx\|_{\std}=\|g\|_{\std}+\TS{e_H}{x}{H}-\TS{e_H}{v}{H}\le \|g\|_{\std}.
		\end{equation*}
		Similarly, if $x\in K$ then 
		\begin{align*}
		\|gx\|_{\std}&=\|g\|_{\std}+\TS{e_K}{x}{K}-\TS{e_K}{e_K}{K}\\&\le \|g\|_{\std}+\TS{e_H}{e_H}{H}-L_H\\ &= \|g\|_{\std}-1\\&\le \|g\|_{\std}.
		\end{align*}

		Now suppose that $x$ is of the form $x'hk$, for $h\in H\backslash\{e_H\}$, $k\in \backslash\{e_K\}$. Then looking at the petal decomposition of $\cay{H}{S_H}$ and using the inductive hypothesis,
		\begin{align*}
		\|gx\|_{\std}&=\|gx'hk\|_{\std}\\
		&=\|gx'h\|_{\std}+\TS{e_K}{k}{K}-\TS{e_K}{e_K}{K}\\
		&\le \|g\|_{\std}-1. 
		\end{align*}
		
		On the other hand, if $x$ is of the form $x'kh$ for $h\in H\backslash\{e_H\}$, $k\in \backslash\{e_K\}$. Similarly to the above we have,
		\begin{align*}
		\|gx\|_{\std}&=\|gx'kh\|_{\std}\\
		&=\|gx'k\|_{\std}+\TS{e_H}{h}{H}-\TS{e_H}{e_H}{H}\\
		&\le \|gx'k\|_{\std}+1\\
		&\le \|g\|_{\std}.
		\end{align*}
		This finishes the proof of the first case. By symmetry of $H$ and $K$, the proof for the case where $L_K=\TS{e_K}{e_K}{K}+1$ is completely analogous.
		\item[\textbf{Case 2.}] 
		Now suppose that $L_H\le \TS{e_H}{e_H}{H}$ and $L_K\le \TS{e_K}{e_K}{K}$. A similar inductive argument to the one given in the first case proves that if we define $g$ as in Case $1$, now for any $x\in K$ with $1\le \|x\|_{S_K}\le n$ we have $\|gx\|_{\std}\le \|g\|_{\std}$.
	\end{itemize}

	With this, we see that the element $g$ constructed has depth at least $n-1$. As $n$ was arbitrary, we conclude that $A\wr (H*K)$ has unbounded depth with respect to $\std$.
\end{proof}

If $\cay{G}{S_G}$ is a cycle, then the value of $\TS{e_G}{e_G}{G}$ is always equal to $|G|$, which is attained with a path starting at $e_G$ and traversing the cycle. On the other hand, the value of $\max_{\substack{g\in G\backslash\{e_G\}}}\Big\{ \TS{e_G}{g}{G}\Big\}$ is $$|G|-1+\left\lfloor \frac{|G|}{2}\right\rfloor-1=|G|+\left\lfloor \frac{|G|}{2}\right\rfloor-2.$$ This is the length of a path starting at $e_G$, doing the cycle up to the last vertex before returning to $e_G$, and then going back to an element $g\in G$ at distance $\left\lfloor \frac{|G|}{2}\right\rfloor$ from the identity.

The above implies that $\displaystyle \mathscr{H}(G,S_G)=\left\lfloor \frac{|G|}{2}\right\rfloor-2$, and so in particular we have that $
\mathscr{H}(\Z/8\Z,\{b\})+\mathscr{H}(\Z/2\Z,\{b\})=1$, so that the example of Subsection \ref{subsection: lamplighter with uniformly bounded depth} is consistent with Theorem \ref{thm: general dead ends on free products}.

More generally, for any pair of cyclic groups $(H,S_H)$ and $(K,S_K)$, with cyclic generating sets, Condition \eqref{eq:general characterization dead ends on free products} holds if and only if
$$
\left\lfloor\frac{|H|}{2} \right\rfloor+\left\lfloor\frac{|K|}{2} \right\rfloor\ge 5.
$$
Denoting the orders of $H$ and $K$ by $o_H$ and $o_K$, respectively, we have that Theorem \ref{thm: general dead ends on free products} implies the lamplighter over $H*K$ has unbounded depth with respect to the standard generating set if and only if
\begin{enumerate}
	\item $(o_H,o_K)\in \{(6,4),(6,5),(6,6),(7,4),(7,5),(7,6),(7,7)\}$, or
	\item  $o_H\ge 8$ and $o_K\ge 2$.
\end{enumerate} 

\begin{example}\label{example: free product unbounded depth arbitrarily far away position} Consider $H=\Z/4\Z=\langle b\rangle$ and $K=\Z/4\Z=\langle c \rangle$, so that according to Corollary \ref{cor: free products of finite abelian groups characterization of lamplighter depth} the lamplighter group $A\wr (H*K)$ has unbounded depth. By following the proof of Theorem \ref{thm: general dead ends on free products}, it is possible to see that for any element $x\in \langle b^2,c^2\rangle\leqslant H*K$, there is a dead end $g\in A\wr (H*K)$ of arbitrarily large depth of the form $g=(f,x)$. That is, the position $x$ of the lamplighter in a dead end can be at an arbitrary distance from the identity of $H*K$. This is a difference with the behavior of dead ends of lamplighters over free groups, where the position of the lamplighter for a dead end is necessarily the identity element (Proposition \ref{prop: characterization of dead ends on lamplighters over free groups}).
	\begin{figure}[h!]
		\centering
		\begin{tikzpicture}[cuad_blue/.style={draw, anchor=west, regular polygon,very thick, regular polygon sides=4, outer sep=0,orange!70!black},cuad_green/.style={draw, anchor=west, regular polygon, very thick, regular polygon sides=4, outer sep=0,green!70!black},dot/.style={circle,inner sep=2.5pt,fill},my star/.append style={star, draw, star points=5,inner sep=0, star point ratio=2},scale=0.6, every node/.style={scale=0.6}]

\node[cuad_blue, minimum width=4.5cm] (Ocentralblue) at (0,0) {};
\node[cuad_green, minimum width=4.5cm] (Ocentralgreen) at ([shift=({213.5:5.7 cm})]Ocentralblue.center) {};
\node[dot, label={[above left]\Huge$e$}] at (Ocentralblue.corner 3) {};
%\foreach \x [count=\i] in {0,1,2,3} {
%\node[cuad_b, minimum width=4cm] (Ocentral) at (0,0) {};
%}
\node[cuad_green, minimum width=2cm] (Ogreen11) at ([shift=({142.5:3.78 cm})]Ocentralblue.center) {};
\node[cuad_green, minimum width=2cm] (Ogreen12) at ([shift=({55.5:2.8 cm})]Ocentralblue.center) {};
\node[cuad_green, minimum width=2cm] (Ogreen13) at ([shift=({-55.5:2.8 cm})]Ocentralblue.center) {};
\node[cuad_blue, minimum width=2cm] (Oblue11) at ([shift=({142.5:3.78 cm})]Ocentralgreen.center) {};
\node[cuad_blue, minimum width=2cm] (Oblue12) at ([shift=({-142.5:3.78 cm})]Ocentralgreen.center) {};
\node[cuad_blue, minimum width=2cm] (Oblue13) at ([shift=({-55.5:2.8 cm})]Ocentralgreen.center) {};

\node[cuad_blue, minimum width=1cm] (Oblue111) at ([shift=({143:1.78 cm})]Ogreen11.center) {};
\node[cuad_blue, minimum width=1cm] (Oblue112) at ([shift=({-143:1.78 cm})]Ogreen11.center) {};
\node[cuad_blue, minimum width=1cm] (Oblue112) at ([shift=({56.5:1.28 cm})]Ogreen11.center) {};

\node[cuad_blue, minimum width=1cm] (Oblue121) at ([shift=({143:1.78 cm})]Ogreen12.center) {};
\node[cuad_blue, minimum width=1cm] (Oblue122) at ([shift=({-56:1.28 cm})]Ogreen12.center) {};
\node[cuad_blue, minimum width=1cm] (Oblue122) at ([shift=({56.5:1.28 cm})]Ogreen12.center) {};

\node[cuad_blue, minimum width=1cm] (Oblue131) at ([shift=({-56:1.28 cm})]Ogreen13.center) {};
\node[cuad_blue, minimum width=1cm] (Oblue132) at ([shift=({-143:1.78 cm})]Ogreen13.center) {};
\node[cuad_blue, minimum width=1cm] (Oblue132) at ([shift=({56.5:1.28 cm})]Ogreen13.center) {};

\node[cuad_green, minimum width=1cm] (Ogreen111) at ([shift=({143:1.78 cm})]Oblue11.center) {};
\node[cuad_green, minimum width=1cm] (Ogreen112) at ([shift=({-143:1.78 cm})]Oblue11.center) {};
\node[cuad_green, minimum width=1cm] (Ogreen112) at ([shift=({56.5:1.28 cm})]Oblue11.center) {};

\node[cuad_green, minimum width=1cm] (Ogreen121) at ([shift=({143:1.78 cm})]Oblue12.center) {};
\node[cuad_green, minimum width=1cm] (Ogreen122) at ([shift=({-56:1.28 cm})]Oblue12.center) {};
\node[cuad_green, minimum width=1cm] (Ogreen122) at ([shift=({-143:1.78 cm})]Oblue12.center) {};

\node[cuad_green, minimum width=1cm] (Ogreen131) at ([shift=({-56:1.28 cm})]Oblue13.center) {};
\node[cuad_green, minimum width=1cm] (Ogreen132) at ([shift=({-143:1.78 cm})]Oblue13.center) {};
\node[cuad_green, minimum width=1cm] (Ogreen132) at ([shift=({56.5:1.28 cm})]Oblue13.center) {};

\node[dot] at (Ocentralblue.corner 1) {};
\node[dot] at (Ogreen12.corner 1) {};
\node[dot] at (Oblue122.corner 1) {};
\node[dot] at (Oblue12.corner 1) {};
\node[dot] at (Ogreen122.corner 1) {};
\node[dot] at (Ogreen122.corner 3) {};
\node[font=\Huge] at ([shift=({45:0.8 cm})]Oblue122.corner 1) {$\cdot$};
\node[font=\Huge] at ([shift=({45:1 cm})]Oblue122.corner 1) {$\cdot$};
\node[font=\Huge] at ([shift=({45:1.2 cm})]Oblue122.corner 1) {$\cdot$};
\node[font=\Huge] at ([shift=({-140:0.8 cm})]Ogreen122.corner 3) {$\cdot$};
\node[font=\Huge] at ([shift=({-140:1 cm})]Ogreen122.corner 3) {$\cdot$};
\node[font=\Huge] at ([shift=({-140:1.2 cm})]Ogreen122.corner 3) {$\cdot$};
\end{tikzpicture}
		\caption{The lamplighter position of a dead end of arbitrary depth of \\ $(A\wr (\Z/4\Z*\Z/4\Z),\std)$ can be any element inside the subgroup $\langle b^2,c^2\rangle$.}
		\label{fig: Z/4Z*Z/4Z possible positions for lamplighter in a dead end}
	\end{figure}
\end{example}

Recall that a finite graph is said to be Hamiltonian-connected if any pair of distinct vertices can be joined by a Hamiltonian path. This is not possible in a bipartite graph, so in that case the strongest possible condition is being Hamiltonian-laceable: having Hamiltonian paths between any two vertices of distinct partite sets of the graph.

\begin{lemma}\label{lem: tsp in hamiltonian connected and hamiltonian laceable graphs}
	Let $(G,S_G)$ be a finite group together with a finite generating set. If $\cay{G}{S_G}$ is either Hamiltonian-connected or Hamiltonian-laceable, then
	$$
	\mathscr{H}(G,S_G)\le 0.
	$$
\end{lemma}
\begin{proof}
	If $\cay{G}{S_G}$ is Hamiltonian-connected, then $\TS{e_G}{e_G}{G}=|G|+1$, while for any $g\in G\backslash\{e_G\}$ we have $\TS{1_G}{g}{G}=|G|$. This implies that $\mathscr{H}(G,S_G)=-1$.
	
	Now suppose $\cay{G}{S_G}$ is Hamiltonian-laceable, so that in particular it is bipartite. Write $G=A\cup B$ where $A$ and $B$ form a partition with $e_G\in G$. By considering a Hamiltonian path from $e_G\in A$ to a generator in $S_G\subseteq B$, we see that $\TS{e_G}{e_G}{G}=|G|+1$.
	
	Now for any $g\in G\backslash\{e_G\}$, we consider two cases. If $g\in B$, then there is a Hamiltonian path from $e_G$ to $g$ so that $\TS{e_G}{g}{G}=|G|$. On the contrary, if $g\in A$ then there is a Hamiltonian path from $e_G$ to a neighbor of $g$ and hence $\TS{e_G}{g}{G}=|G|+1$. We conclude that $\mathscr{H}(G,S_G)=0$.
\end{proof}

\begin{corollary}\label{cor: free product of two hamiltonian connected or hamiltonian laceable has unbounded depth for lamplighter} Suppose that $(H,S_H)$ and $(K,S_K)$ are two finite groups with finite generating sets, which are both either Hamiltonian-connected or Hamiltonian-laceable. Then for any group $(A,S_A)$ with unbounded depth, the lamplighter group $A\wr(H*K)$ with the corresponding standard generating set $\std$ has unbounded depth. 
\end{corollary}
\begin{proof}
	Lemma \ref{lem: tsp in hamiltonian connected and hamiltonian laceable graphs} implies that
	$$
	\mathscr{H}(K,S_K)+\mathscr{H}(H,S_H)\le 0,
	$$
	which is precisely the negation of Condition \eqref{eq:general characterization dead ends on free products} in Theorem \ref{thm: general dead ends on free products}.
\end{proof}
The following corollary characterizes the case of free products of finite abelian groups, generalizing our previous comments about cyclic groups, and covering all possible Cayley graphs. One can think of this corollary as saying that among lamplighters over the free products of two finite abelian groups, the only way to get uniformly bounded depth is for the finite groups forming the base to be sufficiently long cycles.

\begin{corollary}\label{cor: free products of finite abelian groups characterization of lamplighter depth} Suppose that $(H,S_H)$ and $(K,S_K)$ are two finite abelian groups. For any group $(A,S_A)$ with unbounded depth, consider the lamplighter group $A\wr(H*K)$ with the corresponding standard generating set $\std$.  We now list all possible cases for $H$ and $K$.
	\begin{enumerate}
		\item If $|H|=1$ or $|K|=1$, then $(A\wr(H*K),\std)$ has unbounded depth.
		
		\item If $|H|\in \{2,3\}$ (resp. $|K|\in \{2,3\}$), then
		\begin{enumerate}
			\item if $\cay{K}{S_K}$ (resp. $\cay{H}{S_H}$) is a cycle of length at least $8$, then $(A\wr(H*K),\std)$ has uniformly bounded depth, and
			
			\item otherwise $(A\wr(H*K),\std)$ has unbounded depth.
		\end{enumerate}
	\end{enumerate}
	Now suppose that $|H|,|K|\ge 4$.
	\begin{enumerate}\setcounter{enumi}{2}
		\item If neither $\cay{H}{S_H}$ nor $\cay{K}{S_K}$ are cycles, then $(A\wr(H*K),\std)$ has unbounded depth.
		\item Suppose that $\cay{H}{S_H}$ is a cycle.
		\begin{enumerate}
			\item If $|H|\in \{4,5\}$, then $(A\wr(H*K),\std)$ has uniformly bounded depth if and only if $\cay{K}{S_K}$ is a cycle of length at least $6$.
			
			\item If $|H|\in \{6,7\}$, then $(A\wr(H*K),\std)$ has uniformly bounded depth if and only if $\cay{K}{S_K}$ is a cycle or bipartite.
			
			\item If $|H|\ge 8$, then $(A\wr(H*K),\std)$ has uniformly bounded depth.
		\end{enumerate}
		\item An analogous statement to (4) holds when $\cay{K}{S_K}$ is a cycle.
	\end{enumerate}
\end{corollary}
\begin{proof}
	\begin{enumerate}
		\item If $|H|=1$ or $|K|=1$, then $H*K$ is a finite group, and the result follows from Proposition \ref{prop: lamplighter over finite group deep dead ends if and only if lamps have deep dead ends}.
		
		\item Suppose that $|H|\in \{2,3\}$. Then $H\cong \Z/2\Z$ or $H\cong\Z/3\Z$, and in both cases it holds that $\mathscr{H}(H,S_H)=-1$.
		\begin{enumerate}
			\item If $\cay{K}{S_K}$ is a cycle of length $\ell\ge 8$, number its vertices cyclically $w_0,\ldots,w_{\ell-1}$ where $w_0$ is the identity element. Then the vertex indexed by $\lfloor \frac{\ell}{2}\rfloor$ satisfies $$\TS{e_K}{w_{\lfloor\frac{\ell}{2}\rfloor} }{K}\ge\TS{e_K}{e_K}{K}+2,$$ and so
			$\mathscr{H}(K,S_K)\ge 2$. This implies that
			$$
			\mathscr{H}(H,S_H)+\mathscr{H}(K,S_K)\ge -1+2=1,
			$$
			and hence Condition \eqref{eq:general characterization dead ends on free products} holds.
			
			\item In any other case, Proposition \ref{prop: cayley graphs of abelian finite group are either cycles, Hamiltonian connected or Hamiltonian laceable } implies that $\cay{K}{S_K}$ is either Hamiltonian-connected or Hamiltonian-laceable. In both cases, Corollary \ref{cor: free product of two hamiltonian connected or hamiltonian laceable has unbounded depth for lamplighter} proves that the corresponding lamplighter group has unbounded depth.
			
		\end{enumerate}
		%A symmetric proof holds for the case $|K|\in \{2,3\}$.
		
		\item If neither $\cay{H}{S_H}$ nor $\cay{K}{S_K}$, then both of these graphs are either Hamiltonian-connected or Hamiltonian laceable thanks to Proposition \ref{prop: cayley graphs of abelian finite group are either cycles, Hamiltonian connected or Hamiltonian laceable }. Then Corollary \ref{cor: free product of two hamiltonian connected or hamiltonian laceable has unbounded depth for lamplighter} implies that $(A\wr(H*K),\std)$ has unbounded depth.
		\item Now we suppose that $\cay{H}{S_H}$ is a cycle of length at least $4$.
		\begin{enumerate}
			\item If $|H|\in \{4,5\}$, then $\max_{v\in H\backslash\{e_H\}}\Big\{\TS{e_H}{v}{H} \Big \}=\TS{e_H}{e_H}{H}$ and hence $\mathscr{H}(H,S_H)=0$. Then if $\cay{K}{S_K}$ is a cycle of length at least $6$ we have $\max_{w\in K\backslash\{e_K\}}\Big\{\TS{e_K}{w}{K} \Big\}= \TS{e_K}{e_K}{K}+1$, and in any other case Proposition \ref{prop: cayley graphs of abelian finite group are either cycles, Hamiltonian connected or Hamiltonian laceable } together with Lemma \ref{lem: tsp in hamiltonian connected and hamiltonian laceable graphs} show that $$\max_{w\in K\backslash\{e_K\}}\Big\{\TS{e_K}{w}{K} \Big\}\le \TS{e_K}{e_K}{K}.$$ In the first case Condition \eqref{eq:general characterization dead ends on free products} in Theorem \ref{thm: general dead ends on free products} is satisfied, while on the second its negation holds.
			
			\item If $|H|\in \{6,7\}$, then $\max_{v\in H\backslash\{e_H\}}\Big\{\TS{e_H}{v}{H} \Big \}=\TS{e_H}{e_H}{H}+1$. If $\cay{K}{S_K}$ is a cycle, it must have length at least $4$ and so
			$$
			\max_{w\in K\backslash\{e_K\}}\Big\{\TS{e_K}{w}{K} \Big\}\ge \TS{e_K}{e_K}{K}.
			$$
			On the other hand, if $\cay{K}{S_K}$ is not a cycle then
			$$
			\max_{w\in K\backslash\{e_K\}}\Big\{\TS{e_K}{w}{K} \Big\}= \TS{e_K}{e_K}{K},
			$$ if $\cay{K}{S_K}$ is bipartite, and 
			$$
			\max_{w\in K\backslash\{e_K\}}\Big\{\TS{e_K}{w}{K} \Big\}= \TS{e_K}{e_K}{K}-1,
			$$
			otherwise. The first two cases satisfy Condition \eqref{eq:general characterization dead ends on free products} in Theorem \ref{thm: general dead ends on free products} while the third one does not.
			
			\item If $|H|\ge 8$, then $\max_{v\in H\backslash\{e_H\}}\Big\{\TS{e_H}{v}{H} \Big \}\ge\TS{e_H}{e_H}{H}+2$. In general, we have that 
			$$
			\max_{w\in K\backslash\{e_K\}}\Big\{\TS{e_K}{w}{K} \Big\}\ge \TS{e_K}{e_K}{K}-1,
			$$
			so that Condition \eqref{eq:general characterization dead ends on free products} in Theorem \ref{thm: general dead ends on free products} is always satisfied.
		\end{enumerate}
		\item An analogous proof replacing $H$ by $K$ and vice-versa proves the analogous statement to the above.\end{enumerate}
	\phantom{analogous statement to the above.}
\end{proof}

\section*{Acknowledgments}
I would like to thank my advisor Anna Erschler for very helpful discussions, encouragement, and corrections. I would also like to thank Romain Tessera for some suggestions on the redaction of this paper. I thank the anonymous referee for their comments and corrections. This project has received funding from the European Union’s Horizon 2020 research and innovation programme under the Marie Sk\l{}odowska-Curie grant agreement N\textsuperscript{\underline{o}} 945322.

\bibliographystyle{ws-ijac}
\bibliography{biblio}
\end{document}